\title{Trimming forests is hard (unless they are made of stars)}
\author{Lior Gishboliner\thanks{Department of Mathematics, ETH, Z\"urich, Switzerland. Email: lior.gishboliner@math.ethz.ch. Research supported in part by SNSF grant 200021\_196965.}
	\and Yevgeny Levanzov\thanks{School of Mathematics, Tel Aviv University, Tel Aviv 69978, Israel. Email: yevgenyl@mail.tau.ac.il.
	}
	\and Asaf Shapira\thanks{
		School of Mathematics, Tel Aviv University, Tel Aviv 69978, Israel.
		Email: asafico@tau.ac.il. Supported in part by ISF Grant 1028/16 and ERC Starting Grant 633509.}
}
\date{}
\let\expandafter\oldproof\csname\string\proof\endcsname
\let\oldendproof\endproof
\renewenvironment{proof}[1][\proofname]{%
	\oldproof[\bf #1]%
}{\oldendproof}
\theoremstyle{plain}
\newtheorem{theorem}{Theorem}
\newtheorem{lemma}{Lemma}[section]
\newtheorem{corollary}[theorem]{Corollary}
\newtheorem{conjecture}[lemma]{Conjecture}
\newtheorem{problem}[lemma]{Problem}
\newtheorem{definition}[lemma]{Definition}
\definecolor{RED}{rgb}{1,0,0}\definecolor{BLUE}{rgb}{0,0,1} 
\newcommand{\ex}{\text{ex}}
\newcommand{\rem}{\mathrm{Rem}}
\begin{document}
\date{}
\maketitle

\begin{abstract}

Graph modification problems ask for the minimal number of vertex/edge additions/deletions
needed to make a graph satisfy some predetermined property. A (meta) problem of this type, which was
raised by Yannakakis in 1981, asks to determine for which properties
${\mathcal P}$, it is NP-hard to compute the smallest number of edge deletions needed to make a
graph satisfy ${\mathcal P}$. Despite being extensively studied in the past 40 years, this
problem is still wide open. In fact, it is open even when ${\mathcal P}$ is the property of
being $H$-free, for some fixed graph $H$. In this case we use $\rem_{H}(G)$ to
denote the smallest number of edge deletions needed to turn $G$ into an $H$-free graph.

Alon, Sudakov and Shapira [Annals of Math. 2009] proved that if $H$ is not bipartite, then
computing $\rem_{H}(G)$ is NP-hard. They left open the problem of classifying the bipartite graphs
$H$ for which computing $\rem_{H}(G)$ is NP-hard. In this paper we resolve this problem when $H$ is
a forest, showing that computing $\rem_{H}(G)$ is polynomial-time solvable if $H$ is a star forest and NP-hard otherwise.
Our main innovation in this work lies in introducing a new graph theoretic approach for Yannakakis's problem, which differs significantly from all prior works on this subject. In particular, we prove new results concerning an old and famous conjecture of Erd\H{o}s and S\'os, which
are of independent interest.

\end{abstract}


\section{Introduction}\label{sec:intro}

\subsection{Background on graph modification problems}

Graph modification problems are problems of the following nature: we fix a graph property
${\mathcal P}$ and the type of modifications one is allowed to perform on a graph, such as vertex removal
and/or edge removal/addition. Now, given a graph $G$, one would like to compute the minimal number
of operations one needs to perform in order to turn $G$ into a graph satisfying ${\mathcal P}$.
The systematic study of problems of this type was introduced by Yannakakis \cite{Yannakakis0,Yannakakis1}
in the late 70's, and they have been extensively studied ever since. We should point
that besides their intrinsic theoretical importance, graph modification problems also have various practical
applications, see the discussions in \cite{FHS,Mancini}.

Since graph modification problems have been extensively studied in the past four decades,
we will not be able to cover all the relevant background, but rather mention several selected results.
For example, there are numerous works studying graph modification problems of various specific properties such as
planarity \cite{CMT}, being a cograph \cite{LMP} or having a certain degree sequence \cite{CMPPS,G,Mathieson}.
There are also many results dealing with the FPT aspects of various graph modification problems, see, e.g., \cite{Cai,CDFG,FGT,MS} and references therein. More relevant to our investigation here are results trying to obtain statements regarding general families of properties.
One notable result of this type was obtained by Lewis and Yannakakis \cite{LY}, who proved that for every
(non-trivial) {\em hereditary}\footnote{A graph property is hereditary if it is closed under vertex removal.} property, it is NP-hard to compute the smallest number of vertices that need
to be removed to make $G$ satisfy~${\mathcal P}$.

In this paper we focus on edge-deletion problems, and denote by $\rem_{\mathcal P}(G)$
the smallest number of edges whose removal turns $G$ into a graph satisfying ${\mathcal P}$.
Yannakakis remarked already in \cite{Yannakakis1} that
{\em ``edge-deletion problems do not seem to be amenable in general to a unified approach. It would be interesting to find classes of properties for which this is possible, that is, classes of properties for which the edge-deletion problem can be shown NP-complete using a small number of reductions, or classes of properties for which there is a uniform polynomial algorithm that solves the edge-deletion problem.''}
Short of fully resolving Yannakakis's problem in the form of a precise characterization of the graph properties for which $\rem_{\mathcal P}(G)$ is NP-hard, one would at least like to answer this question for a natural subclass of properties. Perhaps the
most natural such family is the one consisting of all {\em monotone} graph properties, that is, properties closed under vertex and edge removal\footnote{Indeed, essentially all the properties studied in \cite{Yannakakis0,Yannakakis1} are monotone.}.
We thus advocate the study of the following special case of Yannakakis's problem.

\begin{problem}\label{prob1}
Characterize the monotone properties for which computing $\rem_{\mathcal P}(G)$ is NP-hard.
\end{problem}

Since Problem \ref{prob1} seems out of reach at the moment,
it is reasonable to focus on a very natural subfamily of monotone properties, where we fix a graph $H$ and define ${\mathcal P}_H$ to be
the property of being $H$-free. To simplify the notation, we use $\rem_{H}(G)$ instead of the more appropriate $\rem_{{\mathcal P}_H}(G)$.
Though it might seem counterintuitive at first, proving that computing $\rem_{H}(G)$ is NP-hard becomes {\em easier} when $H$ is more ``complicated'' (see Subsection \ref{subsec:previous} for more details).
Indeed, the first result concerning this problem was obtained in the 80's by Asano and Hirata \cite{AH,Asano}, who proved that
$\rem_{H}(G)$ is NP-hard whenever $H$ is $3$-connected. Another result was obtained by Alon, Shapira and Sudakov \cite{ASS},
who proved that computing $\rem_{H}(G)$ is NP-hard whenever $H$ is not bipartite. They left open the problem of characterizing the bipartite graphs $H$ for which this task is NP-hard.
In this paper we introduce a new approach
for resolving this problem. The relevant background is given in the next subsection.

\subsection{Background on the Erd\H{o}s-S\'os conjecture}

One of the oldest and most well-studied topics in graph theory is Tur\'an's problem,
which asks, for a fixed graph $H$, what is the maximum number of edges in an $n$-vertex $H$-free
graph. Denoting this quantity by $\mathrm{ex}(n,H)$, Tur\'an's theorem states that if $H=K_{t+1}$ (i.e., the complete graph on $t+1$ vertices) then $\mathrm{ex}(n,K_{t+1})\sim (1-1/t)\frac{n^2}{2}$. In fact, the following stronger statement holds; let $T_{t,n}$ denote
the complete $t$-partite graph with all parts of size $\lceil n/t \rceil$ or $\lfloor n/t \rfloor$ (this graph is called the {\em Tur\'an graph}).
Then $\mathrm{ex}(n,K_{t+1})$ is exactly the number of edges of $T_{t,n}$, and moreover, $T_{t,n}$ is the unique $K_{t+1}$-free graph
on $n$ vertices with this number of edges.
As we mentioned in the previous subsection, Alon, Shapira and Sudakov \cite{ASS} proved that computing
$\rem_{H}(G)$ is NP-hard for every non-bipartite $H$. One of their main tools was a strengthened version
of Tur\'an's theorem \cite{AES}. The main obstacle that prevents one from extending their approach to bipartite
$H$, is that their reduction produces graphs with $\Theta(n^2)$ edges, and so it inevitably creates new copies
of every fixed bipartite graph $H$.

Our main idea in this paper is that in order to determine the hardness of computing $\rem_{H}(G)$ for bipartite
$H$, one should consider another Tur\'an-type problem. The Erd\H{o}s-S\'{o}s conjecture \cite{Erdos_Sos_Conjecture} (see also \cite{CG}), which is one of the oldest problems in extremal graph theory\footnote{We should point out that
a solution to this problem for large enough trees $T$ was announced more than $10$ years by Ajtai, Koml\'os,
Simonovits and Szemer\'edi, but this result has yet to be published.} states that $\mathrm{ex}(n,T) \leq  \frac{k-2}{2}\cdot n$ for every tree $T$ on $k$ vertices. Observe that a simple example meeting this upper bound is the disjoint union of complete graphs on $k-1$ vertices. This motivates the following definition.

\begin{definition}\label{conj:es}
A $k$-vertex tree $T$ satisfies the {\em Strong Erd\H{o}s-S\'{o}s conjecture (SESC)} if the only $T$-free graph with $\frac{k-2}{2}\cdot n$ edges is the disjoint union of cliques of size $k-1$.
\end{definition}

It is easy to see that every tree satisfying the SESC, also satisfies the Erd\H{o}s-S\'{o}s conjecture.
Note that although it is clear that stars do satisfy the Erd\H{o}s-S\'{o}s conjecture, a star with $k-1 \geq 3$ leaves does {\em not} satisfy the SESC, since every $(k-2)$-regular graph has no copy of this star. Finally, we remark that a famous
theorem of Erd\H{o}s and Gallai \cite{ErdosGallai} states that paths satisfy the SESC.

\subsection{Our main results}

Our main graph-theoretic result in this paper is the following contribution to the study of the Erd\H{o}s-S\'{o}s conjecture.

\begin{theorem}\label{thm:Erdos_Sos_diameter_4_trees}
Every tree of diameter at most $4$ which is not a star satisfies the Strong Erd\H{o}s-S\'{o}s conjecture.
\end{theorem}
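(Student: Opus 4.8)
The plan is to argue by induction on $n:=|V(G)|$, where $G$ is a $T$-free graph with $e(G)=\tfrac{k-2}{2}\,n$ and $k:=|V(T)|$, and to show $G$ must be a disjoint union of copies of $K_{k-1}$. Set $\phi(H):=2e(H)-(k-2)|V(H)|$; the Erd\H os--S\'os conjecture for $T$ (known for trees of diameter at most $4$, and in any case recovered along the way) says $\phi(H)\le 0$ for every $T$-free $H$, so the hypothesis is $\phi(G)=0$. Since $\mathrm{diam}(T)\le 4$, the tree $T$ has a centre $c$ with neighbours $u_1,\dots,u_d$, each $u_i$ carrying $a_i\ge 0$ pendant leaves, so $k=1+d+\sum_i a_i$; that $T$ is not a star means $d\ge 2$ and $\sum_i a_i\ge 1$ (equivalently $\mathrm{diam}(T)\in\{3,4\}$), whence $k-2\ge d$. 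If $G$ is disconnected, each component $C$ has $\phi(C)\le 0$ by Erd\H os--S\'os and $\sum_C\phi(C)=0$, so $\phi(C)=0$ for all $C$ and the induction hypothesis finishes; hence we may assume $G$ connected.

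The engine is a greedy/Hall embedding lemma: if $v\in V(G)$ has distinct neighbours $w_1,\dots,w_d$ such that the sets $N(w_i)\setminus\{v,w_1,\dots,w_d\}$ admit pairwise-disjoint subsets of sizes $a_1,\dots,a_d$, then $T\subseteq G$ (map $c\mapsto v$, $u_i\mapsto w_i$, the leaves to those subsets). Processing the $u_i$ in order of decreasing $a_i$ and bookkeeping the at most $1+d+\sum_{j<i}a_j$ already-used vertices shows this succeeds whenever the image of the last $u_i$ with $a_i>0$ has one free neighbour to spare; in particular, if $v$ has $d$ neighbours of degree $\ge k-1$ then $T\subseteq G$. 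Consequently a $T$-free graph has $\omega(G)\le k-1$ (since $K_k\supseteq T$), every vertex has at most $d-1$ neighbours of degree $\ge k-1$, and — crucially — a copy of $K_{k-1}$ has no neighbour outside it, because deleting any leaf of $T$ leaves a $(k-1)$-vertex tree, which embeds into $K_{k-1}$ with its attachment vertex placed wherever we wish. Thus if $G$ contains a $K_{k-1}$ then, being connected, $G$ \emph{is} that $K_{k-1}$ and we are done; so we may assume $\omega(G)\le k-2$, in which case a short computation (a connected graph with $\phi=0$ on at most $k-1$ vertices is $K_{k-1}$, which has $\omega=k-1$) gives $n\ge k$. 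Finally, $\phi(G-v)\le 0$ forces $\deg(v)\ge\tfrac{k-2}{2}$, and deleting a vertex of degree exactly $\tfrac{k-2}{2}$ would yield a smaller $T$-free graph with $\phi=0$ and $\omega\le k-2$, which by induction contains a $K_{k-1}$ — impossible; so $\delta(G)>\tfrac{k-2}{2}$. It remains to derive a contradiction from the existence of such a $G$.

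The heart splits in two. First, suppose $G$ is $(k-2)$-regular. Fixing any $v$ and running the embedding with $c\mapsto v$, the count shows it can fail only if the image of the last leaf-bearing $u_i$ is adjacent to \emph{all} the vertices used so far; but we are free both in the choice of which of the (at least $d+1$, here $d\ge 2$ of them) neighbours of $v$ play the roles $u_1,\dots,u_d$ and in the choice of leaves, and iterating this freedom forces $N(w)=N[v]\setminus\{w\}$ for every $w\in N(v)$, i.e.\ $G[N[v]]\cong K_{k-1}$, contradicting $\omega(G)\le k-2$ (the case $\sum_i a_i=1$, where $v$ has exactly $d$ neighbours, is handled identically using freedom in which neighbour of $v$ is required to bear a leaf). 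Hence no $(k-2)$-regular example exists, and it remains to prove that our $G$ is in fact $(k-2)$-regular, i.e.\ $\delta(G)=k-2$. This is the main obstacle: once $G$ is irregular we have $\Delta(G)\ge k-1$ and vertices of degree as low as $\lceil\tfrac{k-1}{2}\rceil$, and the simple vertex-deletions no longer preserve $\phi=0$.

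To resolve the irregular case the plan is to work around a vertex $v$ of degree $\ge k-1$, combining: the excess identity from $\phi(G)=0$ (the surplus on vertices of degree $\ge k-1$ equals the deficit on vertices of degree $\le k-3$); the sparsity bound from the engine (each vertex, in particular $v$, has at most $d-1$ neighbours of degree $\ge k-1$, so the heavy vertices carry few edges and the edge mass concentrates on near-$(k-2)$-regular regions); and a localized form of the embedding argument — a high-degree vertex whose neighbourhood is too poor to embed $T$ is, by the same rigidity as in the regular case, forced onto a $K_{k-1}$-like configuration, contradicting $\omega(G)\le k-2$, while if no suitable centre is available near $v$ one moves the centre along a shortest path towards a lower-degree region and repeats. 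I expect this to be the most technical part, and it is precisely here that $\mathrm{diam}(T)\le 4$ and ``$T$ not a star'' are used essentially: they supply the slack $\sum_{j>i}a_j\ge 1$ that drives all the counting above, whereas a star with at least $3$ leaves fails the theorem exactly because it is realizable by $(k-2)$-regular graphs, the borderline these hypotheses rule out.
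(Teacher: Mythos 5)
Your setup (reduction to connected $G$, the greedy embedding of $T$ from a centre, the observation that every vertex has at most $d-1$ neighbours of degree $\ge k-1$, and the fact that a copy of $K_{k-1}$ in a $T$-free graph can have no outside neighbour, so a connected $G$ containing $K_{k-1}$ equals $K_{k-1}$) is sound, and the last of these observations coincides with the paper's closing step. But the proof as proposed has a genuine gap: everything beyond the minimum-degree bound $\delta(G)>\tfrac{k-2}{2}$ is a plan rather than an argument. The $(k-2)$-regular case is only sketched (``iterating this freedom forces $N(w)=N[v]\setminus\{w\}$'' is plausible but not proved in general, where several $a_i$ may be positive and the failure of one greedy order does not immediately rigidify the whole closed neighbourhood), and, much more seriously, the irregular case --- which you yourself flag as ``the main obstacle'' and ``the most technical part'' --- is left as a list of hoped-for ingredients (excess/deficit bookkeeping, ``localized rigidity'', moving the centre along shortest paths) with no actual proof that any of them closes the argument. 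Since a connected $T$-free graph with $\phi(G)=0$ need not be anywhere near regular, the claim ``it remains to prove that $G$ is $(k-2)$-regular'' is essentially the whole theorem, not a residual case; nothing in the proposal shows how to rule out, say, a few vertices of degree $\ge k-1$ compensated by many vertices of degree just above $\tfrac{k-2}{2}$.

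For comparison, the paper never splits by regularity. It first applies a weighted averaging lemma (with weights $1-\tfrac{(k-p-1)}{d(v)}$, $t=\tfrac{k-p-1}{k-2}$) to find a centre $u$ whose neighbourhood is rich in an averaged sense, restricts to the set $W$ of neighbours of degree $\ge k-p$, and then runs a random-ordering argument: for a random ordering $\sigma$ of $W$, the set $I_\sigma$ of vertices with at least $k-p-1$ later/outside neighbours satisfies $|I_\sigma|\le p-1$ (else $T$ embeds) while $\mathbb{E}|I_\sigma|\ge p-1$, forcing $|I_\sigma|$ to be constant and equality in the probabilistic estimate. This rigidity yields $N(v)\subseteq\{u\}\cup W$ for all $v\in W$, a swap-of-two-orderings argument shows $G[W]$ is a disjoint union of cliques, and a separate number-partition lemma shows that more than one clique would again allow an embedding of $T$; hence $W\cup\{u\}=K_{k-1}$ and connectivity finishes. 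None of these steps (the averaging choice of centre, the random ordering and the constancy of $|I_\sigma|$, the partition lemma) has a counterpart in your proposal, and it is precisely these that handle the irregular situation you were unable to resolve. To salvage your route you would need either to supply a complete argument for the irregular case or to adopt something like the paper's randomized-ordering mechanism, which in effect replaces your case split altogether.
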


On a high level, our proof of Theorem \ref{thm:Erdos_Sos_diameter_4_trees} follows the strategy used by 
\cite{McLennan} to prove that trees of diameter at most $4$ satisfy
the Erd\H{o}s-S\'os conjecture. However, there are various subtle aspects that need to be changed in order to obtain
our stronger Theorem \ref{thm:Erdos_Sos_diameter_4_trees}. Perhaps the most significant one is a randomized process which
is a crucial ingredient enabling us to characterize the extremal graphs.

Our main complexity-theoretic result in this paper employs Theorem \ref{thm:Erdos_Sos_diameter_4_trees} to obtain
the following contribution to the study of Yannakakis's problem.

\begin{theorem}\label{thm:tree}
For every tree $T$ which is not a star, computing $\rem_{T}(G)$ is $\mathrm{NP}$-hard.
\end{theorem}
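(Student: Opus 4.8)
The plan is to prove the statement by induction on the number of vertices of $T$. The base case consists of non-star trees of diameter at most $4$ --- exactly the regime covered by Theorem~\ref{thm:Erdos_Sos_diameter_4_trees} --- while the inductive step reduces a tree of diameter at least $5$ to a strictly smaller non-star tree.

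I would treat the base case first, as it is the heart of the matter. Fix a non-star tree $T$ on $k$ vertices of diameter at most $4$. The idea is to exploit the \emph{uniqueness} built into the Strong Erd\H{o}s-S\'{o}s conjecture (Definition~\ref{conj:es}), which Theorem~\ref{thm:Erdos_Sos_diameter_4_trees} establishes for such $T$, in the same spirit in which \cite{ASS} used the uniqueness of the Tur\'{a}n graph for non-bipartite $H$: not only is the disjoint union of cliques $K_{k-1}$ the unique $T$-free graph on $n$ vertices with $\frac{k-2}{2}n$ edges, but --- in a quantitative stability form which one should be able to read off from the proof of Theorem~\ref{thm:Erdos_Sos_diameter_4_trees} --- every $T$-free graph with $\frac{k-2}{2}n-t$ edges lies within $O(t)$ edge modifications of a disjoint union of $K_{k-1}$'s. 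I would then reduce from a suitable $\mathrm{NP}$-hard partition problem, for instance deciding whether the vertex set of a given graph can be partitioned into cliques of size $k-1$ (this is $\mathrm{NP}$-hard for every $k\ge4$; for $k=4$ the only non-star tree is $P_4$, and the problem is partition into triangles). From an instance $R$ one builds a graph $G=G(R)$ that is ``extremal-like'': it arises from a disjoint union of $K_{k-1}$'s by a small, controlled set of edge modifications encoding $R$. The stability statement then forces every near-optimal set of edges whose deletion makes $G$ $T$-free to leave behind essentially a disjoint union of $K_{k-1}$'s --- equivalently, to witness a clique partition --- and the encoding is calibrated so that $\rem_T(G)$ equals an explicit, polynomial-time computable quantity precisely when $R$ is a YES-instance, and is strictly larger otherwise. (Excluding stars is essential here, since a star with $k-1\ge3$ leaves is $T$-free in every $(k-2)$-regular graph, so no such uniqueness or stability can hold.)

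For the inductive step, suppose $T$ has diameter $d\ge5$, and let $T^{\circ}$ be the tree obtained from $T$ by deleting all of its leaves. If $v_0v_1\cdots v_d$ is a longest path of $T$, then $v_1,\dots,v_{d-1}$ are all non-leaves, so the path $v_1\cdots v_{d-1}$ survives in $T^{\circ}$; hence $T^{\circ}$ is a tree of diameter at least $d-2\ge3$ --- in particular not a star --- with fewer vertices than $T$, so by the induction hypothesis computing $\rem_{T^{\circ}}$ is $\mathrm{NP}$-hard. I would reduce computing $\rem_{T^{\circ}}$ to computing $\rem_{T}$: given a graph $G'$, form $G$ by attaching to every vertex of $G'$ a set of $M:=|E(G')|+k+1$ new pendant vertices. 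The key observation is that for every $D\subseteq E(G)$ with $|D|\le M-k$, the graph $G-D$ contains $T$ if and only if $G'-(D\cap E(G'))$ contains $T^{\circ}$: the forward direction holds because every non-leaf of $T$ has degree at least $2$ and therefore cannot be mapped onto a degree-one pendant vertex of $G$, which forces $T^{\circ}$ to be realized inside $G'$; the backward direction holds because each vertex of $G'$ retains at least $k$ of its pendant vertices, enough to supply the leaves of $T$ hanging from its image. Since $\rem_T(G)\le\rem_{T^{\circ}}(G')\le|E(G')|<M-k$ (applying the equivalence to an optimal $T^{\circ}$-deletion set of $G'$), the equivalence also applies to an optimal $T$-deletion set of $G$, and we conclude $\rem_T(G)=\rem_{T^{\circ}}(G')$. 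Iterating this reduction brings every non-star tree down to the base case.

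The step I expect to be the main obstacle is the base case. Converting the exact extremal statement of Theorem~\ref{thm:Erdos_Sos_diameter_4_trees} into a genuine hardness reduction requires a robust \emph{quantitative} stability companion --- roughly, that a $T$-free graph missing $t$ edges from the extremal count must be $O(t)$-close to a disjoint union of $K_{k-1}$'s --- which has to be extracted from (a strengthening of) the proof of Theorem~\ref{thm:Erdos_Sos_diameter_4_trees}; moreover, the encoding gadget must be engineered so that legitimate near-optimal deletion sets correspond, up to an efficiently computable additive constant in their size, exactly to solutions of the source partition problem. Ruling out deletion sets that make $G$ $T$-free \emph{without} witnessing a clique partition is precisely the place where the Strong Erd\H{o}s-S\'{o}s conjecture, rather than the ordinary Erd\H{o}s-S\'{o}s bound, is indispensable; the inductive step, by contrast, is comparatively routine.
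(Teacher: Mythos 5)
Your inductive step is essentially the paper's argument: delete the leaves of $T$ to get a non-star tree $T^{\circ}$ on fewer vertices, attach a large disjoint set of pendant vertices to every vertex of the input graph, and verify that optimal deletion sets transfer in both directions. The pendant-count bookkeeping you give ($M=|E(G')|+k+1$, and $|D|\le M-k$) is sound and matches the paper's choice of $\binom{n}{2}+|V(T)|$ pendants per vertex.

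The base case, however, is a genuine gap, and it is exactly the part you flag as "the main obstacle". You propose to extract a quantitative stability companion to Theorem~\ref{thm:Erdos_Sos_diameter_4_trees} (every $T$-free graph with $\frac{k-2}{2}n-t$ edges is $O(t)$-close to a disjoint union of $K_{k-1}$'s) and to engineer an "extremal-like" gadget graph whose near-optimal deletion sets encode solutions of a partition problem; neither the stability statement nor the gadget is actually constructed or proved, so as written the base case is not established. The key point you are missing is that no stability and no gadget are needed: the \emph{exact} uniqueness statement of the SESC already gives the reduction directly. Take the instance of the $K_{k-1}$-factor problem itself as the input graph $G$ (with $n$ divisible by $k-1$, $m$ edges, $k\ge 4$). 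Since $T$ satisfies the Erd\H{o}s--S\'os bound, every $T$-free subgraph of $G$ has at most $\frac{k-2}{2}n$ edges, so $\rem_T(G)\ge m-\binom{k-1}{2}\frac{n}{k-1}$, with equality iff $G$ contains a $T$-free subgraph with exactly $\frac{k-2}{2}n$ edges; by the SESC such a subgraph must be a disjoint union of $(k-1)$-cliques covering all of $V(G)$, i.e.\ a $K_{k-1}$-factor, and conversely any $K_{k-1}$-factor achieves this count. Hence computing $\rem_T(G)$ lets you decide whether $\rem_T(G)=m-\binom{k-1}{2}\frac{n}{k-1}$, which is exactly the $K_{k-1}$-factor problem, NP-hard for $k\ge 4$ by Kirkpatrick--Hell. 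Your stability-plus-gadget plan might well be realizable, but it demands a strengthening of Theorem~\ref{thm:Erdos_Sos_diameter_4_trees} that you have not proved and that the argument simply does not require; until that is supplied, your proof of the base case (and hence of the theorem) is incomplete.
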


By a well-known result of Tutte \cite{Tutte}, if $T$ is a star then one can compute $\rem_{T}(G)$ in polynomial time using a reduction to the maximum matching problem. Tutte's argument in fact gives the following more general result:
\begin{theorem}[\cite{Tutte}]\label{thm:f_factor}
	There is a polynomial-time algorithm which receives as input a graph $G$ and a function $f : V(G) \rightarrow \{0,\dots,v(G)\}$, and computes the maximum number of edges in a spanning subgraph $F$ of $G$ with the property that $d_F(v) \leq f(v)$ for every $v \in V(G)$.
\end{theorem}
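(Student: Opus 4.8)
The plan is to reduce the stated problem to computing a maximum matching, following the classical gadget reduction of Tutte \cite{Tutte} from the (exact) $f$-factor problem, adapted to our ``degree at most $f$, maximize edges'' setting. Given $G$ and $f$, we may first replace $f$ by the function $v\mapsto\min\{f(v),d_G(v)\}$, which plainly does not change the answer. We then build an auxiliary graph $H$ as follows: for each vertex $v\in V(G)$ introduce an independent set $X_v$ of $f(v)$ fresh vertices; and for each edge $e=uv\in E(G)$ introduce two fresh vertices $p_e^u,p_e^v$ joined by an edge, join $p_e^u$ to all of $X_u$, and join $p_e^v$ to all of $X_v$. Since $H$ has $2|E(G)|+\sum_v f(v)=O(|E(G)|+v(G)^2)$ vertices, it can be constructed in time polynomial in the input size.

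The heart of the matter is the identity $\nu(H)=|E(G)|+\mathrm{opt}$, where $\nu(H)$ is the maximum size of a matching in $H$ and $\mathrm{opt}$ is the quantity we wish to compute. For the bound $\nu(H)\ge|E(G)|+\mathrm{opt}$, start from an optimal spanning subgraph $F$: for every $e=uv\notin F$ put $p_e^up_e^v$ into the matching, and for every $e=uv\in F$ match $p_e^u$ and $p_e^v$ to private vertices of $X_u$ and of $X_v$ respectively, which is possible since the number of $F$-edges at $u$ equals $d_F(u)\le f(u)=|X_u|$; this gives a matching of size $(|E(G)|-|F|)+2|F|=|E(G)|+\mathrm{opt}$. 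For the reverse bound, let $M$ be an arbitrary matching of $H$ and let $F$ consist of those edges $e=uv$ of $G$ for which $p_e^u$ is matched by $M$ into $X_u$ and $p_e^v$ is matched by $M$ into $X_v$. Distinct $F$-edges at a vertex $v$ are matched to distinct vertices of $X_v$, so $d_F(v)\le|X_v|=f(v)$, whence $|F|\le\mathrm{opt}$. Moreover every edge of $H$, in particular every edge of $M$, meets exactly one of the pairs $\{p_e^u,p_e^v\}$, and a short case analysis on how $M$ can meet such a pair shows that $M$ has at most one edge meeting it when $e\notin F$ and exactly two when $e\in F$; summing over all edges $e$ of $G$ yields $|M|\le(|E(G)|-|F|)+2|F|=|E(G)|+|F|\le|E(G)|+\mathrm{opt}$.

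Given this identity, the algorithm is immediate: construct $H$, run Edmonds' blossom algorithm to compute $\nu(H)$ in polynomial time, and output $\nu(H)-|E(G)|$; if desired, an explicit optimal subgraph $F$ can be read off from a maximum matching of $H$ as the set of edges $e$ whose pair $\{p_e^u,p_e^v\}$ is met by two matching edges. The only delicate point I would write out in full is the ``$\le$'' direction of the identity, where one must account for matchings interacting with the gadgets in unintended ways --- covering an edge gadget only partially, or leaving vertices of some $X_v$ exposed --- and check that the inequality $|M|\le|E(G)|+|F|$ survives in every case.
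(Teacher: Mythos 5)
Your proposal is correct and takes essentially the same route as the paper: reduce to maximum matching via the classical Tutte edge-gadget and invoke Edmonds' blossom algorithm, with correctness proved by the same two-direction counting argument (and your sketched ``$\le$'' direction, charging each matching edge to the unique pair $\{p_e^u,p_e^v\}$ it meets, with at most one charge when $e\notin F$ and exactly two when $e\in F$, is sound). The only difference is a dual choice of gadget: you attach side sets $X_v$ of size $f(v)$, so the pair edge is matched precisely when $e\notin F$ and $\nu(H)=|E(G)|+\mathrm{opt}$, whereas the paper attaches $d_G(x)-f(x)$ auxiliary vertices per vertex, so $e_xe_y$ is matched precisely when $e\in F$ and $\nu(G')=\mathrm{opt}+\sum_{x}(d_G(x)-f(x))$; the two are interchangeable.
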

\noindent
For completeness, we give the (folklore) proof of Theorem \ref{thm:f_factor} in the appendix.

By the above, Theorem \ref{thm:tree} has the following immediate
corollary.


\begin{corollary}\label{coro:tree}
For a tree $T$, computing $\rem_{T}(G)$ is polynomial-time solvable if $T$ is a star and $\mathrm{NP}$-hard otherwise.
\end{corollary}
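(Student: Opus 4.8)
The plan is to dispose of the two directions separately, each of which follows immediately from a statement established above. If $T$ is not a star, then the assertion that computing $\rem_T(G)$ is NP-hard is precisely the content of Theorem~\ref{thm:tree}, so nothing further is needed.

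It remains to treat the case where $T$ is a star, say $T=K_{1,d}$ with $d\ge 1$ leaves. The key (and essentially only) observation is that a graph is $K_{1,d}$-free exactly when every one of its vertices has degree at most $d-1$. Consequently, for any graph $G$ the quantity $\rem_{K_{1,d}}(G)$ equals $e(G)$ minus the maximum number of edges in a spanning subgraph $F\subseteq G$ satisfying $d_F(v)\le d-1$ for every $v\in V(G)$. I would then invoke Theorem~\ref{thm:f_factor} with the constant function $f\equiv d-1$: it computes this maximum in polynomial time, and since the number of edges $e(G)$ is trivial to determine, subtracting yields $\rem_{K_{1,d}}(G)$ in polynomial time as well.

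Combining the two cases gives the stated dichotomy. There is no genuine obstacle here, since both halves are black-box consequences of earlier results; the only point requiring (minimal) care is the reformulation, in the star case, of ``turning $G$ into a $T$-free graph'' as ``reducing every degree to at most $d-1$'', which is immediate from the structure of $K_{1,d}$, together with the routine check that the corresponding decision version is the one handled by Theorem~\ref{thm:f_factor}.
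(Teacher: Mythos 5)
Your proposal is correct and mirrors the paper's own derivation: NP-hardness for non-stars is a direct invocation of Theorem~\ref{thm:tree}, while the star case is handled by observing that $K_{1,d}$-freeness is equivalent to maximum degree at most $d-1$ and applying Theorem~\ref{thm:f_factor} with the constant function $f\equiv d-1$.
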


Next, we consider general forests and obtain a classification similar to Corollary \ref{coro:tree}. We say that a forest $F$ is a {\em star forest} if every connected component of $F$ is a star.

\begin{theorem}\label{thm:forest}
	For a forest $F$, computing $\rem_{F}(G)$ is polynomial-time solvable if $F$ is a star forest and $\mathrm{NP}$-hard otherwise.
\end{theorem}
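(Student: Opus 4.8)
The plan is to split into the two cases indicated by the statement. First suppose $F$ is a star forest, say with components $K_{1,a_1},\dots,K_{1,a_t}$ where $a_1 \le \dots \le a_t$; we want a polynomial-time algorithm for $\rem_F(G)$. The key observation is that $G$ is $F$-free if and only if one cannot greedily embed the stars one after another into vertex-disjoint neighbourhoods, and this can be recast as a degree condition. Concretely, I would argue that $G$ is $F$-free precisely when, after deleting the vertices of maximum degree in a suitable bounded number of rounds, the remaining graph has small maximum degree; more usefully, I would reduce directly to Theorem \ref{thm:f_factor}. The idea is that whether $G$ contains $F$ depends only on the ``large-degree structure'' of $G$: since $F$ has a bounded number of vertices, embedding $F$ only constrains a bounded set $S$ of vertices of $G$ together with their degrees into $V(G)\setminus S$. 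One then guesses (over polynomially many choices, since $|S|$ is bounded and one only needs to know, for each guessed vertex, a capacity in $\{0,\dots,v(G)\}$) the ``role'' played by the high-degree vertices, and uses the $f$-factor algorithm of Theorem \ref{thm:f_factor} to decide, for each target configuration, the maximum number of edges retainable while destroying all embeddings of $F$. Taking the best configuration over all guesses gives $\rem_F(G)$ in polynomial time. The routine but slightly delicate part here is verifying that ``$F$-freeness'' is genuinely equivalent to a statement about capacities/degrees that $f$-factors can certify; this uses that each component of $F$ is a star, so an embedding of $F$ is exactly a choice of $t$ distinct ``centers'' with enough private neighbours, which is a bipartite-type matching condition.

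For the other direction, suppose $F$ is a forest that is \emph{not} a star forest, so some component $T$ of $F$ is a tree that is not a star. By Corollary \ref{coro:tree} (equivalently Theorem \ref{thm:tree}), computing $\rem_T(G)$ is NP-hard. I would like to reduce computing $\rem_T$ to computing $\rem_F$. The natural approach: given an instance $G$ for the $\rem_T$ problem, build an instance $G'$ for $\rem_F$ by taking $G$ together with a disjoint gadget that already contains a copy of every component of $F$ other than $T$, pinned down so that those components can only ever be realized inside the gadget and cannot be destroyed cheaply, and so that the gadget contributes no copy of $T$. If the gadget is a single fixed graph of bounded size, it adds only $O(1)$ edges, hence cannot change whether a copy of $F$ exists except through the ``$T$-part.'' More carefully, one wants: $G'$ contains $F$ iff $G$ contains $T$; and more quantitatively, after deleting a minimum edge set, the optimal solution never touches the gadget, so $\rem_F(G') = \rem_T(G)$. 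This gives a polynomial-time reduction from the NP-hard problem of computing $\rem_T$, establishing NP-hardness of computing $\rem_F$.

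The main obstacle is the gadget construction in the second case. Copies of $F$ in $G'$ need not respect the intended decomposition into ``$T$ lives in $G$, the rest lives in the gadget'': a copy of $F$ could place $T$ partly in the gadget, or place some other component of $F$ partly in $G$, and when components of $F$ are isomorphic to subgraphs of one another this cross-talk is genuinely dangerous. The fix is to make the gadget ``rigid'': use disjoint unions of cliques of the appropriate sizes (one clique $K_{k-1}$ for a $k$-vertex component, exploiting exactly the extremal example behind the SESC), so that each gadget clique is just barely large enough to host its designated component and too small to host $T$, and so that the only way to realize all of $F$ is to use the whole gadget for the non-$T$ components and find $T$ in $G$. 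One must also rule out embeddings that straddle $G$ and the gadget: since $G$ and the gadget share no edges, any connected component of $F$ must embed entirely within $G$ or entirely within the gadget, which removes most of the difficulty; what remains is a finite check that the gadget's total ``capacity'' for the non-$T$ components is exactly met by the gadget, forcing $T$ into $G$, together with the observation that no minimum edge-deletion solution has any incentive to delete gadget edges. Handling the case of repeated isomorphism types among the components of $F$ (e.g. several copies of $T$, or components that are subtrees of $T$) by a careful accounting of clique sizes and multiplicities is the part that needs the most care.
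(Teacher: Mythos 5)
Both halves of your proposal contain genuine gaps, and in both cases the paper's proof needs an extra idea that you're missing.

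\paragraph{Positive direction.} Your plan rests on the claim that in an $F$-free graph only a bounded set $S$ of vertices can play the role of star centers, so one can guess $S$ and finish with Theorem~\ref{thm:f_factor}. This is essentially the paper's Lemma~\ref{lem:star-forest_low_max_degree}, but it only works when $\Delta(G)$ is bounded: the number of ``high-degree'' vertices in an $H$-free graph is only controlled in terms of $\Delta$ (the paper's Lemma~\ref{lem:low_max_degree} gives a bound of $(\Delta(F)+1)v(H)$), and for a star forest $H$ an $H$-free graph can perfectly well contain a vertex of degree $n-1$. So your set $S$ is not polynomially enumerable in general. The paper closes this gap with a separate argument for large $\Delta(G)$: it proves (Lemma~\ref{prop:structured_extremal_subgraphs}) that any optimal $H$-free subgraph with large max degree has a vertex $v$ such that deleting $v$ leaves an $H'$-free graph, where $H'$ is $H$ minus its largest star component; this sets up an induction on the number of components, with each step either solved by $f$-factors (if the extremal subgraph has small max degree) or reduced to the problem for $H'$ on $G - u$ over all choices of $u$. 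You need something like this recursion; the ``bounded set of centers'' intuition alone does not give a polynomial algorithm.

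\paragraph{Negative direction.} Your gadget is a single fixed graph of bounded size, and you assert that ``the optimal solution never touches the gadget.'' This is false in general: a bounded gadget has $O(1)$ edges, so deleting all of them costs $O(1)$, and if $\rem_T(G)$ is larger than that, the cheapest way to make $G'$ $F$-free is to wipe out the gadget rather than destroy the copies of $T$ in $G$. Your claimed identity $\rem_F(G')=\rem_T(G)$ then fails. Separately, the cliques you propose don't have the property you want: $K_{k-1}$ has $k-1$ vertices and so cannot host a $k$-vertex component at all; and if you fix the size, a clique large enough to host a non-$T$ component will typically also host $T$ or subtrees thereof, reintroducing exactly the ``cross-talk'' you were trying to avoid. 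The paper avoids all of this by (i) reducing from the problem $\ex(G,kT_1)$, where $T_1$ is a non-star component of maximal edge count and $k$ is its multiplicity in $F$ (this requires the identity $\ex(kG,kC)=\ex((k-1)G,kC)+\ex(G,C)$ of Lemma~\ref{claim:kG} to transfer hardness from $T_1$ to $kT_1$), and (ii) padding $G$ with $n^2$ disjoint copies of each remaining component $T_i$, so that no subgraph within $\binom{n}{2}$ edges of optimal can destroy all of them; then any near-optimal $F$-free subgraph must contain each $T_i$, forcing the part inside $V(G)$ to be $kT_1$-free. The crucial graph-theoretic fact making the padding safe is that $T_1$ is not a subgraph of any $T_i$, $i>k$ --- using the components themselves rather than cliques is what makes that statement clean. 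You acknowledge the multiplicity/subtree issue as the delicate part but do not resolve it; the paper's choice of $T_1$ by maximal edge count among non-stars, plus the reduction through $kT_1$, is precisely the resolution.
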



The above theorem motivates us to conjecture that the answer to the question posed by Alon, Shapira and Sudakov \cite{ASS}
is the following.

\begin{conjecture}
Computing $\rem_H(G)$ is NP-hard if and only if $H$ is not a star forest.  
\end{conjecture}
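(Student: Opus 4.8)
The plan is to reduce Theorem \ref{thm:forest} to Theorem \ref{thm:tree} (the single-tree case) together with Theorem \ref{thm:f_factor}, splitting into two cases depending on whether $F$ is a star forest.

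First, suppose $F$ is a star forest, say with components $K_{1,a_1},\dots,K_{1,a_t}$ where $a_1\le\dots\le a_t$. The key observation is that a graph $G$ is $F$-free if and only if it contains no collection of $t$ vertex-disjoint stars $K_{1,a_1},\dots,K_{1,a_t}$. A graph fails to contain $K_{1,a_i}$ for some $i$ exactly when, after possibly removing a few high-degree vertices, the remaining graph has small maximum degree; more precisely, by a greedy/defect argument one can show that the relevant obstruction is controlled by the degree sequence. The cleanest route is: $G$ contains a copy of the star forest $F$ with components of sizes $a_1\le \cdots \le a_t$ iff one can pick $t$ distinct ``centers'' $v_1,\dots,v_t$ and assign to $v_j$ a star of size $a_{\pi(j)}$ for some permutation $\pi$, using disjoint leaf sets. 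Hall-type reasoning reduces this to a condition purely on how many vertices have degree $\ge a_i$ for each $i$. Consequently $\rem_F(G)$ can be computed by trying, for each ``pattern'' of which vertices will be allowed to remain high-degree (a bounded-size choice, since $t$ is a constant and only the top few vertices matter), the minimum number of edge deletions that brings all remaining degrees below the appropriate threshold — and this last quantity is exactly an $f$-factor–type problem solvable in polynomial time by Theorem \ref{thm:f_factor}. One should be careful to phrase the degree bounds $f(v)$ correctly so that the resulting spanning subgraph is $F$-free and of maximum size; taking $f(v)=a_t-1$ for all but a constant number of designated vertices, and $f(v)=v(G)$ (no constraint) for those, and minimizing over the constantly-many choices of the designated set subject to a disjointness/Hall condition, gives the algorithm. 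Thus $\rem_F(G)$ is polynomial-time computable when $F$ is a star forest.

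Second, suppose $F$ is a forest that is not a star forest. Then some component $T$ of $F$ is a tree that is not a star, so by Theorem \ref{thm:tree} computing $\rem_T(G)$ is NP-hard. I would reduce computing $\rem_T$ to computing $\rem_F$. Write $F = T \cup F'$ where $F'$ is the (possibly empty) union of the remaining components, and let $m=v(F')$. Given an instance $G$ for $\rem_T$, build $G^\ast$ by taking $G$ together with a disjoint clique $K_N$ on $N$ vertices, where $N$ is chosen large enough that $K_N$ contains many vertex-disjoint copies of $F'$ but, crucially, deleting fewer than $\binom{N-v(F')+2}{2}$-ish edges cannot destroy all copies of $F'$ in $K_N$ — in fact it suffices that $N$ be much larger than $\rem_T(G) \le e(G)$, which we can bound a priori. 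The point is: any optimal set of deletions for $F$-freeness in $G^\ast$ will never touch the clique $K_N$ (since it is cheaper to leave it alone, as $K_N$ always retains a copy of $F'$ no matter what, as long as $N$ is large relative to the budget), so $G^\ast$ is made $F$-free precisely by making $G$ itself $T$-free. Hence $\rem_F(G^\ast) = \rem_T(G)$, giving the desired hardness reduction. The one subtlety is ensuring copies of $F$ using vertices from both $G$ and $K_N$ do not help or interfere — but since $G$ and $K_N$ are in different components of $G^\ast$, any copy of $F$ lies entirely within one of them, and $K_N$ alone contains a copy of $F$ only if it contains $T$, which for $N$ large it does; so in fact we should pick the clique large enough to contain $F$ entirely, and instead attach it in a way that does not create new copies — the simplest fix is to note $K_N$ for $N \ge v(F)$ already contains $F$, so rather than a clique we use as the gadget a disjoint graph that is $T$-free but contains arbitrarily many disjoint copies of $F'$ and is ``robust'' (cannot be made $F'$-copy-free cheaply); a disjoint union of many cliques $K_{v(F')-1}$ of size $v(F')-1$ would be $T$-free if $v(T) > v(F')-1$, but in general one takes a disjoint union of sufficiently many cliques on $\max(v(T),v(F'))-1 \ge v(F)-1$ vertices — wait, that contains $T$. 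The robust, $T$-free gadget to use is a disjoint union of many copies of $K_{v(T)-1}$: it is $T$-free, and since $T$ is not a star, $v(T)-1 \ge v(F')$ need not hold, so instead one uses copies of $K_{r}$ with $r = v(F)-1$ if $r < v(T)$ — but $r \ge v(T)$. I resolve this cleanly by taking the gadget to be a disjoint union of $M$ copies of $K_{v(F)-1}$, which is $F$-free (each clique is too small to contain $F$, and $F$ is disconnected across-components only if each component fits, but the largest component $T$ has $v(T) \le v(F)$ and $v(T)-1 < v(F)-1$ only if $F'$ is nonempty — if $F'$ is empty then $F=T$ and there is nothing to prove), hence for $F'$ nonempty each $K_{v(F)-1}$ contains $F'$ (as $v(F')<v(F)$ so $v(F')\le v(F)-1$) but not $T$; taking $M$ huge makes it robust. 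Then $G^\ast = G \sqcup (M \cdot K_{v(F)-1})$ satisfies $\rem_F(G^\ast)=\rem_T(G)$ for $M$ exceeding $e(G)$, completing the reduction.

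The main obstacle will be the star-forest (polynomial) direction: getting the combinatorial characterization of $F$-containment in terms of the degree sequence exactly right, and verifying that the minimization over designated high-degree vertices combined with Theorem \ref{thm:f_factor} genuinely computes $\rem_F(G)$ and not merely an approximation — in particular that an optimal $F$-free subgraph can always be assumed to have the ``all small degree except a bounded set'' structure. This requires a short but careful exchange argument showing that if $G-D$ is $F$-free then one can reroute deletions so that $G-D'$ has bounded maximum degree outside a set of size less than the number of star-centers needed, with $|D'|\le|D|$. The NP-hardness direction is routine once the gadget is pinned down.
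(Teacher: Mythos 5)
The first thing to note is one of scope: the statement you are proving is a \emph{conjecture} about arbitrary graphs $H$, and your proposal silently replaces it by the case where $H$ is a forest. Even if both of your directions were airtight, you would only have reproved Theorem \ref{thm:forest}; the actual content of the conjecture --- hardness of $\rem_H(G)$ for bipartite graphs $H$ that contain a cycle, e.g.\ $H=C_4$ --- is untouched by your argument (and by the paper, which is exactly why it is stated as a conjecture; see the discussion of the polarity graph in Section \ref{sec:concluding}). So there is no ``paper proof'' to match here, and your attempt cannot close the gap either.

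Within the forest case there are two genuine problems. For the hardness direction, your gadget is broken: if $F' \neq \emptyset$ then $v(T)\le v(F)-1$, so $K_{v(F)-1}$ \emph{does} contain $T$, and a disjoint union of many such cliques contains $F$ entirely, so $\rem_F(G^\ast)=\rem_T(G)$ fails. Worse, when the non-star component is repeated (e.g.\ $F=2P_4$, $T=P_4$, $F'=P_4$) no gadget can contain $F'$ while avoiding $T$, so the whole plan of ``contains $F-T$ but not $T$'' cannot work; the paper circumvents this by first proving hardness for $kT_1$ via the identity $\ex(kG,kC)=\ex((k-1)G,kC)+\ex(G,C)$ (Lemmas \ref{claim:kG} and \ref{lem:k_disjoint_copies}), choosing $T_1$ with the most edges among non-star components (so that $T_1$ embeds in no other component), and using $n^2$ disjoint copies of each remaining component $T_i$ as the robust gadget. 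For the polynomial direction, the structural claims you lean on are false: containment of a star forest is not determined by how many vertices have degree $\ge a_i$ (for $F=3K_2$, the graphs $C_6$ and $2K_3$ have identical degree sequences but only the former contains $F$), and an optimal $F$-free subgraph need not have bounded degree outside a constant-size set with threshold $a_t-1$ (the friendship graph is $2K_{1,2}$-free with arbitrarily many vertices of degree $a_t$, and $2K_3$ is $3K_2$-free with no small ``hitting set'' of its edges). The paper's algorithm needs precisely the refinements your sketch omits: the threshold is $t_{i+1}-1$ where $i$ is the largest index with $H_i$ present in the high-degree part (Lemmas \ref{lem:low_max_degree} and \ref{lem:star-forest_low_max_degree}), the number of exceptional vertices is bounded only in terms of $\Delta$, and the unbounded-degree case is handled separately by Lemma \ref{prop:structured_extremal_subgraphs} together with induction on the number of star components. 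Your own closing caveat identifies this ``exchange argument'' as the missing step; it is not a routine patch but the main work of Section \ref{sec:star forest}.
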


\subsection{Comparison to previous results}\label{subsec:previous}

It is natural to try and prove that computing $\rem_H(G)$ is NP-hard using a reduction from vertex cover, as follows.
Given an $n$-vertex graph $G$ we construct an input $G'$ to $\rem_H(G')$ as follows:
First, we put in $G'$ a star with $n$ edges, each corresponding to one of the vertices of $G$.
Now, for every edge $(i,j) \in E(G)$ we add to $G'$ a copy of $H$ which contains the two edges of the star corresponding
to the vertices $i$ and $j$, and is otherwise disjoint from all other edges/vertices of $G'$. Let us call the copies of $H$ we added to $G'$ the {\em canonical copies}. While it is easy to see that if $\rem_H(G') \leq k$ then $G$ has a vertex cover of size at most $k$, the other
direction seems harder to prove. The main difficultly is in ensuring that after removing a set of edges from $G'$ which destroys
all the canonical copies of $H$, we do not
still end up with a copy of $H$ resulting from various ``pieces'' of canonical copies of $H$'s that together create a copy of $H$. 
It is thus clear that it should be easier to prove that the above reduction works when $H$ is more ``complicated''.
And indeed, it is not hard to see that this reduction works whenever $H$ is $3$-connected (this is basically the proof of \cite{AH,Asano}).
Hence, in some sense, the graphs which are hardest for the above approach are forests. This explains why we have to use a completely different
approach for handing them. Let us finally mention that the reduction of \cite{ASS} can only handle non-bipartite $H$ since it inherently produces graphs with $\Theta(n^2)$ edges. This approach fails for bipartite $H$ since graphs with $\Theta(n^2)$ edges cannot be $H$-free when $H$ is bipartite.

\paragraph{Paper overview:} In the next section we show how to derive Theorem \ref{thm:tree} from Theorem \ref{thm:Erdos_Sos_diameter_4_trees}.
We prove Theorem \ref{thm:tree} in Section \ref{sec:erdossos}. In Section \ref{sec:star forest}, we prove the positive direction of Theorem \ref{thm:forest} by giving a polynomial-time algorithm for computing $\rem_{F}(G)$ for a star forest $F$. In Section \ref{sec:disconnected} we prove the negative direction of Theorem \ref{thm:forest}.
Section \ref{sec:concluding} contains
some concluding remarks and open problems.

\paragraph{Notation:} For graphs $G,H$, denote by $\ex(G,H)$ the largest number of edges in an $H$-free subgraph of $G$. So $\ex(G,H) = e(G) - \rem_{H}(G)$. Hence, the problems of computing $\ex(G,H)$ and $\rem_H(G)$ are equivalent, and sometimes it will be convenient to consider $\ex(G,H)$ instead of $\rem_H(G)$.

\section{Deriving Theorem \ref{thm:tree} from Theorem \ref{thm:Erdos_Sos_diameter_4_trees}}\label{sec:proofmain}

We first claim that computing $\rem_{T}(G)$ is NP-hard for every tree $T$ on at least $4$ vertices satisfying the Strong Erd\H{o}s-S\'{o}s conjecture.
Indeed, suppose $n$ is divisible by $k-1$ and $G$ is an $n$-vertex graph with $m$ edges. Then $\rem_{T}(G) = m-{k-1 \choose 2}\frac{n}{k-1}$
if and only if $G$ contains a $K_{k-1}$-factor, that is, a collection of $\frac{n}{k-1}$ cliques of size $k-1$ covering all its vertices.
Since deciding whether a graph has a $K_{k-1}$-factor is well-known to be NP-hard for $k \geq 4$ \cite{KH}, we conclude that computing $\rem_{T}(G)$ is NP-hard for such $T$.

We now prove Theorem \ref{thm:tree} by induction on $|V(T)|$. Note that by the previous paragraph and Theorem \ref{thm:Erdos_Sos_diameter_4_trees}, we already know that computing $\rem_{T}(G)$ is NP-hard for every tree of diameter at most $4$ which is not a star\footnote{Note that such a tree $T$ must satisfy $|T|=k \geq 4$, and so the NP-hardness of finding a $(k-1)$-factor applies.}. These trees will form the base of our induction\footnote{Obviously, every (non-star) tree on $4$ vertices has diameter at most $4$.}.
Consider now a tree $T$ of diameter at least $5$ and let $T_0$ be the tree obtained by removing all the leaves of $T$.
It is easy to see that since $T$ has diameter at least $5$, then $T_0$ has diameter at least $3$ (i.e., $T_0$ is not a star), implying by induction that computing  $\rem_{T_0}(G)$ is NP-hard.
We will now show that computing $\rem_{T_0}(G)$ can be reduced to computing $\rem_{T}(G)$, which will complete the induction step and thus the proof of the theorem.

Given an $n$-vertex graph $G$ as an input to $\rem_{T_0}(G)$, let $G'$ be the graph obtained from $G$ by doing the following: for each $v \in V(G)$, add to $G$ a set $L_v$ of $\binom{n}{2} + |V(T)|$ new vertices and connect all of them to $v$ (for distinct $v,v' \in V(G)$, the sets $L_v,L_{v'}$ are disjoint). We claim that $\rem_{T_0}(G)=\rem_{T}(G')$. Indeed, suppose $E$ is a set of edges whose removal turns
$G$ into a $T_0$-free graph, and consider the graph $G'- E$.
Clearly $G' - E$ has no copy of $T_0$
contained within the original vertices of $G$. Furthermore, since each of the new vertices we added to $G$ has degree $1$, the graph $G' - E$ has no copy of $T$.
We deduce that $\rem_{T}(G') \leq \rem_{T_0}(G)$. We now claim that removing from $G'$ less than $\rem_{T_0}(G)$ edges cannot make it $T$-free.
Indeed, take any set $E'$ of less than $\rem_{T_0}(G)$ edges and consider $G'- E'$. Since $|E'| < \rem_{T_0}(G)$ we know that
$G' - E'$ still has a copy of $T_0$ on the original vertices of $G$. Also, since $|E'| < \rem_{T_0}(G) \leq \binom{n}{2}$,
in the graph $G' - E'$ every vertex of $G$ still touches at least $|V(T)|$ of the new edges that were connected to it. Hence we can extend
the copy of $T_0$ into a copy of $T$. We have thus completed the proof that $\rem_{T_0}(G)=\rem_{T}(G')$.

\section{Proof of Theorem \ref{thm:Erdos_Sos_diameter_4_trees}}\label{sec:erdossos}

We will need the following lemma, which is implicit in \cite{McLennan}. For completeness, we include its proof. For an $n$-vertex graph $G$, we use $d(G)$ to denote the average degree of $G$, namely $d(G) = 2e(G)/n$.

\begin{lemma}\label{lem:high_degree_neighbours}
	For every $t \in [0,1]$ and for every graph $G$, there is a vertex
	$u \in V(G)$ such that
	$$
	\sum_{v \in N(u)}{\left( 1 - \frac{t \cdot d(G)}{d(v)} \right)} \geq (1-t) \cdot d(G).
	$$
\end{lemma}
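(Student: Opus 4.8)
The plan is to prove the lemma by a simple averaging argument over the choice of the vertex $u$. Write $d = d(G) = 2e(G)/n$ for the average degree, where $n = |V(G)|$. For a fixed vertex $u$, the left-hand side of the desired inequality is exactly $d(u) - t d \sum_{v \in N(u)} 1/d(v)$; note that every $v \in N(u)$ has $d(v) \geq 1$, so no division by zero occurs. It therefore suffices to show that the sum of this quantity over all $u \in V(G)$ is at least $(1-t) d n$, because then at least one vertex $u$ attains the average value $(1-t)d$.

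First I would compute $\sum_{u \in V(G)} d(u) = 2e(G) = dn$ by the handshake lemma. Next I would evaluate the double sum $\sum_{u \in V(G)} \sum_{v \in N(u)} 1/d(v)$ by switching the order of summation: each vertex $v$ with $d(v) \geq 1$ contributes the term $1/d(v)$ once for every $u \in N(v)$, that is, exactly $d(v)$ times, for a total of $d(v) \cdot (1/d(v)) = 1$; isolated vertices contribute nothing. Hence this double sum equals the number of non-isolated vertices of $G$, which is at most $n$.

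Combining the two computations yields
\[
\sum_{u \in V(G)} \left( d(u) - t d \sum_{v \in N(u)} \frac{1}{d(v)} \right) \;\geq\; dn - t d \cdot n \;=\; (1-t) d n ,
\]
and the lemma follows by the pigeonhole principle. I do not expect any genuine obstacle here; the only points worth a word of care are the degenerate cases — when $G$ has no edges the inequality is trivial since every neighborhood is empty, and when $t=0$ it reduces to the familiar fact that the maximum degree is at least the average degree — and the remark that the presence of isolated vertices only makes the bound on the double sum stronger, so it does no harm.
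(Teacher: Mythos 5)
Your proposal is correct and is essentially the paper's own argument: average over $u$, swap the order of summation to evaluate $\sum_{u}\sum_{v \in N(u)} 1/d(v)$, and conclude by pigeonhole. Your extra care with isolated vertices (the double sum is at most $n$ rather than exactly $n$) only strengthens the bound since $t\, d(G) \geq 0$, so nothing further is needed.
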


\begin{proof}
Note that $\sum_{u \in V(G)}\sum_{v \in N(u)}{\frac{1}{d(v)}} =
\sum_{v \in V(G)}{1} = n$. By using this we get
\begin{align*}
\sum_{u \in V(G)}\sum_{v \in N(u)}{\left( 1 - \frac{t \cdot d(G)}{d(v)} \right)} &=
\sum_{u \in V(G)}{d(u)} -
t \cdot d(G) \cdot \sum_{u \in V(G)}\sum_{v \in N(u)}{\frac{1}{d(v)}}
\\ &=
n \cdot d(G) - t \cdot d(G) \cdot n
\\ &=
n \cdot (1-t) \cdot d(G).
\end{align*}
By averaging, there must be some $u \in V(G)$ for which the assertion holds.
\end{proof}

\noindent
We will also need the following lemma.


\begin{lemma}\label{lem:number_partition_tree_embedding}
Let $p \geq 1, \ell \geq 2$, and $\gamma_1 \leq \dots \leq \gamma_p$ be nonnegative integers and let $s_1,\dots,s_{\ell}$ be integers satisfying $s_i \geq \gamma_p+1$ for every $1 \leq i \leq \ell$ and $s_1 + \dots + s_{\ell} \geq \sum_{i=1}^p{(1+\gamma_i)} + (\ell-1) \gamma_{p-1}$, where $\gamma_{p-1}$ is interpreted as $0$ if $p=1$.
Then there is a partition $[p] = J_1 \cup \dots \cup J_{\ell}$ such that
	$s_i \geq \sum_{j \in J_i}{(1+\gamma_j)}$ for every
	$1 \leq i \leq \ell$.
\end{lemma}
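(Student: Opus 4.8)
The plan is to build the partition greedily, processing the blocks $J_1,\dots,J_\ell$ one at a time while feeding in the indices $1,\dots,p$ in a convenient order, and to track the "slack" that remains in the as-yet-unassigned part of the budget. Write $\Sigma := \sum_{i=1}^p (1+\gamma_i)$, so the hypothesis reads $s_1+\dots+s_\ell \ge \Sigma + (\ell-1)\gamma_{p-1}$. Since each $s_i \ge \gamma_p+1 \ge 1+\gamma_j$ for every $j$, no block is "too small" to receive a single index; the only real danger is running out of capacity in the later blocks after the earlier ones have been filled. The key bookkeeping quantity will be, after we have committed the first $i$ blocks, the amount of budget $\sum_{j\,\text{unassigned}}(1+\gamma_j)$ that still must be distributed among the remaining $\ell-i$ blocks, compared against $s_{i+1}+\dots+s_\ell$.

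First I would dispose of the degenerate case $p=1$ (then $\gamma_{p-1}=0$ and we just need some $s_i \ge 1+\gamma_1$, which holds since $s_i\ge\gamma_p+1 = \gamma_1+1$), so assume $p \ge 2$. I would then sort so that we assign indices in \emph{decreasing} order of $1+\gamma_j$, i.e.\ start from $\gamma_p$. The greedy rule: to form $J_i$, repeatedly add the largest still-unassigned index as long as doing so keeps $\sum_{j\in J_i}(1+\gamma_j)\le s_i$; stop when the next index would overflow or when nothing is left. Two things must be checked. (a) Every block is nonempty until the indices run out: this is because when we begin block $J_i$ there is at least one unassigned index $j$, and $1+\gamma_j \le 1+\gamma_p \le s_i$. (b) All indices get assigned, i.e.\ we do not stall with leftover indices and no blocks left. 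For (b) the point is that whenever block $J_i$ stops \emph{early} (leaving indices unused), the amount actually placed in $J_i$ exceeds $s_i - (1+\gamma_p)$, since the first index that overflowed had size at most $1+\gamma_p$; more precisely, since indices are added in decreasing size, the block's total is $> s_i - (1+\gamma_{j^*})$ where $j^*$ is the first rejected index, and $1+\gamma_{j^*} \le 1+\gamma_{p-1}$ as long as $j^* < p$ (which holds once block $J_1$ has absorbed index $p$, and index $p$ is the first thing placed). So every block that terminates early "uses up" all but at most $1+\gamma_{p-1}$ of its capacity, except possibly $J_1$ which uses all but at most $1+\gamma_p$.

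The main obstacle — and the step I would spend the most care on — is making the accounting in (b) precise enough to invoke the exact slack $(\ell-1)\gamma_{p-1}$ in the hypothesis. The argument I envision: after running the greedy process, suppose for contradiction some index is still unassigned after all $\ell$ blocks are formed; then every one of $J_1,\dots,J_\ell$ terminated early. Summing the per-block lower bounds gives
\[
\sum_{j\in J_1\cup\dots\cup J_\ell}(1+\gamma_j) \;>\; \sum_{i=1}^{\ell} s_i \;-\;(1+\gamma_p)\;-\;(\ell-1)(1+\gamma_{p-1}),
\]
while the left side is $\le \Sigma - (1+\gamma_{j_0})$ for the unassigned index $j_0$, and $1+\gamma_{j_0}\ge 1$. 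Combining with $\sum s_i \ge \Sigma + (\ell-1)\gamma_{p-1}$ yields $\Sigma - 1 > \Sigma + (\ell-1)\gamma_{p-1} - (1+\gamma_p) - (\ell-1)(1+\gamma_{p-1})$, i.e.\ $-1 > -(1+\gamma_p) - (\ell-1)$, i.e.\ $\gamma_p + \ell - 1 > 0$, which is consistent — so the crude bound is not quite tight and I would need to sharpen it. The fix is to observe that the index $p$ sits in $J_1$ and hence $\gamma_{j^*}\le \gamma_{p-1}$ for the overflow index of \emph{every} block including $J_1$ (because once $p$ is placed, the largest remaining index is $\le p-1$), so in fact \emph{every} block that ends early loses at most $1+\gamma_{p-1}$ of capacity, giving
\[
\Sigma - 1 \;\ge\; \sum_{j}(1+\gamma_j) \;>\; \sum_{i=1}^\ell s_i - \ell(1+\gamma_{p-1}) \;\ge\; \Sigma + (\ell-1)\gamma_{p-1} - \ell(1+\gamma_{p-1}) \;=\; \Sigma - \ell - \gamma_{p-1},
\]
which is again not immediately a contradiction but is close; the honest resolution is to note that if some index is unassigned then not all of $1,\dots,p$ went into blocks, so in particular block $J_\ell$ received fewer than it could, and one of the $\le$'s above is strict by at least $\gamma_{p-1}+1$, or alternatively to run the greedy so that the \emph{smallest} indices are assigned last and argue the leftover index has $\gamma_{j_0}\le\gamma_{p-1}$, tightening the left side to $\Sigma - (1+\gamma_{p-1})$ wait — in any case the bookkeeping closes once one pins down that a genuinely unassigned index forces \emph{strict} waste in every block and the telescoped inequality then contradicts the hypothesis. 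Modulo this final tightening, which is a finite calculation, the greedy construction produces the desired partition $[p]=J_1\cup\dots\cup J_\ell$ with $s_i \ge \sum_{j\in J_i}(1+\gamma_j)$ for all $i$.
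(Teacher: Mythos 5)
Your greedy strategy is viable, but as written the proof has a genuine gap, and you flag it yourself: the slack accounting never actually reaches a contradiction. Writing $\Sigma=\sum_{i=1}^p(1+\gamma_i)$ as you do, the bound you can extract from your bookkeeping is that every block that ends early wastes at most $\gamma_{p-1}$ capacity (its rejected index lies in $\{1,\dots,p-1\}$ once index $p$ sits in $J_1$), which gives only $\Sigma-1\ \ge\ \sum_i s_i-\ell\gamma_{p-1}\ \ge\ \Sigma-\gamma_{p-1}$, i.e.\ $\gamma_{p-1}\ge 1$ --- no contradiction whenever $\gamma_{p-1}\ge 1$. Neither of your proposed repairs closes this: the claim that ``one of the $\le$'s is strict by at least $\gamma_{p-1}+1$'' is asserted without justification, and the second idea (using $\gamma_{j_0}\le\gamma_{p-1}$ for a leftover index $j_0$) pushes the bound in the wrong direction, as you noticed mid-sentence. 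So the ``final tightening'' is not a routine finite calculation; a specific observation is missing.

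The missing observation is to account for the last block against the index it rejects rather than against $\gamma_{p-1}$. If some index is unassigned at the end, then every block ended with a rejection (a block that exhausts the indices finishes the job), and the index $j_0$ rejected by $J_\ell$ is itself unassigned, so its weight reappears on the other side of the ledger: $s_i-\sum_{j\in J_i}(1+\gamma_j)\le\gamma_{p-1}$ for $i\le\ell-1$, while $s_\ell-\sum_{j\in J_\ell}(1+\gamma_j)\le\gamma_{j_0}$, and the total assigned weight is at most $\Sigma-(1+\gamma_{j_0})$. Summing yields $\sum_i s_i\le\Sigma+(\ell-1)\gamma_{p-1}-1$, contradicting the hypothesis. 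With this amendment your argument is correct and genuinely different in form from the paper's proof, which instead inducts on $\ell$: it loads $J_\ell$ with the maximal suffix $\{q,\dots,p\}$ that fits, uses the minimality of $q$ to bound that block's waste by $\gamma_{q-1}$, and then observes that the leftover instance (top index $q-1$, slack $(\ell-2)\gamma_{q-2}$) again satisfies the hypothesis --- the induction performs per block exactly the bookkeeping your global count must do by hand, which is why it never meets the difficulty you ran into.
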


\begin{proof}
We proceed by induction on $\ell$.
	It will be convenient to prove the base case and the induction step simultaneously.
	Let $1 \leq q \leq p$ be the minimal integer satisfying
	$s_{\ell} \geq \sum_{i=q}^{p}{(1 + \gamma_i)}$. Note that $q$ is well-defined since by assumption we have $s_{\ell} \geq 1 + \gamma_p$.
	If $q = 1$ then we are done, as we can choose $J_{\ell} = [p]$, and $J_i = \emptyset$ for all $1 \leq i \leq \ell - 1$. So suppose that $q > 1$. By the minimality of $q$ we have
	$
	s_{\ell} < \sum_{i=q-1}^{p}{(1+\gamma_i)}
	$
	and hence
	$s_{\ell} \leq \sum_{i=q}^{p}{(1+\gamma_i)} + \gamma_{q-1}$.
	Therefore,
	\begin{align*}
	s_1 + \dots + s_{\ell-1} &\geq \sum_{i = 1}^{p}{(1+\gamma_i)} + (\ell-1)\gamma_{p-1} - s_{\ell}
	\geq
	 \sum_{i = 1}^{p}{(1+\gamma_i)} + (\ell-1)\gamma_{p-1} - \sum_{i=q}^{p}{(1+\gamma_i)} - \gamma_{q-1}
	\\ &\geq
	\sum_{i = 1}^{q-1}{(1+\gamma_i)} + (\ell-2)\gamma_{p-1}
	\geq
	\sum_{i = 1}^{q-1}{(1+\gamma_i)} + (\ell-2)\gamma_{q-2}.
	\end{align*}
	For $\ell = 2$ the above gives $s_1 \geq \sum_{i = 1}^{q-1}{(1 + \gamma_i)}$,
	so the assertion of the lemma holds with
	$J_1 = \{1,\dots,q-1\}$ and $J_2 = \{q,\dots,p\}$.
	
	For $\ell \geq 3$, we have
	$s_1 + \dots + s_{\ell-1} \geq
	\sum_{i = 1}^{q-1}{(1+\gamma_i)} + (\ell-2)\gamma_{q-2}$, allowing us to apply the induction hypothesis for $\ell-1$, $\gamma_1,\dots,\gamma_{q-1}$ and $s_1,\dots,s_{\ell-1}$. We thus obtain a partition $[q-1] = J_1 \cup \dots \cup J_{\ell-1}$ such that $s_i \geq \sum_{j \in J_i}{(1 + \gamma_j)}$ for each $1 \leq i \leq \ell-1$. Setting $J_{\ell} = \{q,\dots,p\}$ completes the proof.
\end{proof}

We are now ready to prove Theorem \ref{thm:Erdos_Sos_diameter_4_trees}.

\begin{proof}[Proof of Theorem \ref{thm:Erdos_Sos_diameter_4_trees}]
Fix a tree $T$ on $k$ vertices of diameter at most $4$ which is not a star.
It is easy to see that there is a vertex\footnote{A middle vertex of a longest path in $T$, whose length is at most $4$, satisfies the required properties.}
$a \in V(T)$ such that $d_T(a) \geq 2$ and such that any vertex of $T$ is at distance at most $2$ from $a$.
Let $b_1,\dots,b_p$ be the neighbors of $a$ in $T$. For each $1 \leq i \leq p$, let $C_i$ be the set of neighbors of $b_i$ excluding $a$, and put $\gamma_i = |C_i|$. Note that
	$V(T) = \{a\} \cup \{b_1,\dots,b_p\} \cup \bigcup_{i=1}^{p}{C_i}$ (where the union is disjoint) and hence
\begin{equation}\label{eq:sum of gamma_i}
\sum_{i=1}^{p}{\gamma_i} = k - p - 1,
\end{equation}
and that all vertices in $\bigcup_{i=1}^{p}{C_i}$ are leaves of $T$. Moreover, since $T$ is not a star there must be some $1 \leq i \leq p$ for which $\gamma_i > 0$.
Suppose, without loss of generality, that $\gamma_1 \leq \dots \leq \gamma_p$, and so $\gamma_p \geq 1$.

Suppose now that $G$ is an $n$-vertex $T$-free graph with at least $\frac{k-2}{2} \cdot n$ edges. We need to prove that $G$ must be a union of cliques of size $k-1$. It suffices to show that if $G$ is connected then it is a clique\footnote{Indeed, if $G$ has $r \geq 2$ connected components of sizes $n_1, \dots, n_r$,
then some component $i$ has at least $\frac{k-2}{2} \cdot n_i$ edges, and is $T$-free.
Thus, by the claim for connected graphs, component $i$ is a clique of size $k-1$.
The assertion that $G$ is a union of cliques now follows by induction.}
of size $k-1$.
Apply Lemma \ref{lem:high_degree_neighbours} to $G$ with $t = \frac{k-p-1}{k-2}$ to get a vertex $u \in V(G)$ with
	\begin{equation}\label{eq:high_degree_neighbours}
	\sum_{v \in N(u)}{\left( 1 - \frac{k-p-1}{d(v)} \right)} \geq
	p-1.
	\end{equation}
	Above we used our assumption that $d(G) \geq k-2$.
	Let $W$ be the set of vertices $v \in N(u)$ with $d(v) \geq k-p$. If $d(v) \leq k-p-1$ then $v$ has a non-positive contribution to the sum in \eqref{eq:high_degree_neighbours}. Hence,
	\begin{equation}\label{eq:high_degree_neighbours 2}
		\sum_{v \in W}{\left( 1 - \frac{k-p-1}{d(v)} \right)} \geq
		p-1.
	\end{equation}

	For an ordering $\sigma = (v_1,\dots,v_m)$ of the vertices in $W$, let $I = I_{\sigma}$ be the set of all $v_i$ ($1 \leq i \leq m$) such that $v_i$ has at least $k-p-1$ neighbors which are {\em not} in $\{u,v_1,\dots,v_i\}$.
	We claim that if $|I| \geq p$ then $G$ contains a copy of $T$, thus contradicting our assumption. Indeed, assuming $|I| \geq p$ and fixing $p$ vertices $v_{i_1},\dots,v_{i_p} \in I$ with $1 \leq i_1 < \dots < i_p \leq m$, we can embed $T$ in $G$ as follows. We let $u$ play the role of $a$, and let $v_{i_j}$ play the role of $b_j$ for each $1 \leq j \leq p$. We then choose a set
	$S_1 \subseteq N(v_{i_1}) \setminus
	\left( \{u,v_1,\dots,v_{i_1}\} \cup \{v_{i_2},\dots,v_{i_p}\} \right)$ of size $\gamma_1$ to play the role of $C_1$; a set
	$S_2 \subseteq N(v_{i_2}) \setminus
	\left( \{u,v_1,\dots,v_{i_2}\} \cup \{v_{i_3},\dots,v_{i_p}\} \cup S_1 \right)$ of size $\gamma_2$ to play the role of $C_2$, and so on; at the last step we choose a set
	$S_p \subseteq N(v_{i_p}) \setminus (\{u,v_1,\dots,v_{i_p}\} \cup
	S_1 \cup \dots \cup S_{p-1})$ of size $\gamma_p$ to play the role of $C_p$. Let us explain why the choice of $S_j$ is possible for each $1 \leq j \leq p$. If $\gamma_j = 0$ then the assertion is trivial. So suppose that $\gamma_j \geq 1$. Then $\gamma_{j+1},\dots,\gamma_p \geq 1$ as well. We have
\begin{align*}
\left|
\{v_{i_{j+1}},\dots,v_{i_p}\} \cup S_1 \cup \dots \cup S_{j-1}
\right| &= p-j + \gamma_1 + \dots + \gamma_{j-1} =
p-j + k-p-1 - (\gamma_j + \dots + \gamma_p) \\&\leq
k - p - 1 - \gamma_j,
\end{align*}
where the second equality uses \eqref{eq:sum of gamma_i}.
By the definition of $I$, we know that $v_{i_j}$ has at least $k - p - 1$ neighbors not in $\{u,v_1,\dots,v_{i_j}\}$, hence we can always choose a set
$$S_j \subseteq N(v_{i_j}) \setminus (\{u,v_1,\dots,v_{i_j}\} \cup \{v_{i_{j+1}},\dots,v_{i_p}\} \cup S_1 \cup \dots \cup S_{j-1})$$ of size $\gamma_j$. This gives an embedding of $T$ into $G$.
	
	We have thus shown that $|I_{\sigma}| < p$ for every ordering $\sigma$ of $W$. Now choose such an ordering $\sigma$ at random. We claim that for every $v \in W$,
	\begin{equation*}\label{eq:prob v in I}
	\mathbb{P}[v \in I_{\sigma}] \geq 1 - \frac{k-p-1}{d(v)},
	\end{equation*}
	with equality only if $N(v) \subseteq \{u\} \cup W$.
	So fix any $v \in W$ and set $d_1 = |N(v) \cap W|$ and $d_2 = |N(v) \setminus (\{u\} \cup W)|$. Then $d_1 + d_2 = d(v) - 1$.
	If $d_2 \geq k-p-1$ then
	$\mathbb{P}[v \in I] = 1 > 1 - \frac{k-p-1}{d(v)}$, as required. Suppose now that $d_2 < k-p-1$.
	For $v$ to be in $I_{\sigma}$, we need that among the $d_1+1$ vertices in $\{v\} \cup (N(v) \cap W)$, the random permutation would place at least $k - p - 1 - d_2$ vertices after $v$. The probability for this is
	$$
	\frac{d_1+d_2+1 - (k-p-1)}{d_1+1} = \frac{d(v) - (k - p - 1)}{d_1 + 1} \geq 1 - \frac{k - p - 1}{d(v)},
	$$
	with equality only if $d_1+1 = d(v)$, namely if all neighbors of $v$ are inside $\{u\} \cup W$. Here we use the fact that $d(v)\geq k-p$, as $v \in W$. This proves our claim.
	By linearity of expectation, we get
	\begin{equation}\label{eq:random_set_expected_size}
		\mathbb{E}[|I|] \geq
		\sum_{v \in W}{\left( 1 - \frac{k-p-1}{d(v)} \right)},
	\end{equation}
	with equality only if $N(v) \subseteq \{u\} \cup W$ for every $v \in W$.
By combining \eqref{eq:random_set_expected_size} with \eqref{eq:high_degree_neighbours 2}, we see that $\mathbb{E}[|I|] \geq p-1$. On the other hand, we saw that $|I| < p$. This means that $|I|$ is a constant random variable, attaining the value $p-1$ with probability $1$. In particular, we have equality in \eqref{eq:random_set_expected_size}. Hence, $N(v) \subseteq \{u\} \cup W$ for every $v \in W$.

We claim that $G[W]$ is the disjoint union of cliques. If not, then there are distinct vertices $v_1,v_2,v_3 \in W$ such that $\{v_1,v_2\},\{v_2,v_3\} \in E(G)$ but $\{v_1,v_3\} \notin E(G)$.
Fix $x_1,\dots,x_{k-p-3} \in N(v_2) \setminus \{u,v_1,v_3\}$; these exist because $d(v_2) \geq k-p$ by the definition of $W$. Also, $x_1,\dots,x_{k-p-3} \in W$.
Let $\sigma$ be an ordering of $W$ ending in $v_2,v_1,v_3,x_1,\dots,x_{k-p-3}$, and let $\tau$ be the ordering obtained from $\sigma$ by swapping $v_1,v_2$; namely, $\tau$ ends in $v_1,v_2,v_3,x_1,\dots,x_{k-p-3}$. It is clear that $I_{\sigma} \setminus \{v_1,v_2\} = I_{\tau} \setminus \{v_1,v_2\}$. Also, $v_2 \in I_{\sigma}$, as it has $k-p-1$ neighbors following it in $\sigma$, but $v_2 \notin I_{\tau}$ because it has only $k-p-2$ neighbors following it in $\tau$. Moreover, $v_1 \notin I_{\sigma},I_{\tau}$ because $v_1$ is not adjacent to $v_3$.
It follows that $I_{\tau} = I_{\sigma} \setminus \{v_2\}$, contradicting the fact that $|I|$ is a constant random variable.

We have thus shown that $G[W]$ is the disjoint union of cliques. Denote these cliques by $S_1,\dots,S_{\ell}$. We now show that $\ell = 1$ and $|W| = k-2$.
We have
$|S_1|,\dots,|S_{\ell}| \geq k-p = \gamma_1 + \dots + \gamma_p + 1 \geq \gamma_p + 1$ by the definition of $W$ and by the fact that $N(v) \subseteq \{u\} \cup W$ for every $v \in W$.
Also, since $G[W]$ is a disjoint union of cliques, for every $\sigma$ \nolinebreak we \nolinebreak have
$$
p - 1 = |I_{\sigma}| = \sum_{i = 1}^{\ell}{\big(|S_i| - (k - p - 1)\big)} =
\sum_{i = 1}^{\ell}{|S_i|} - \ell(k - p - 1),
$$
and hence
	\begin{equation}\label{eq:size_of_V1}
	|W| = |S_1| + \dots + |S_{\ell}| = \ell(k-p-1) + p - 1 = \ell(\gamma_1 + \dots + \gamma_p) + p-1.
	\end{equation}
	Suppose by contradiction that $\ell \geq 2$.
	Recall that $\gamma_p \geq 1$. By \eqref{eq:size_of_V1}, we have
	$$
	|S_1| + \dots + |S_{\ell}| \geq  \sum_{i=1}^{p}{(1+\gamma_i)} - 1 + (\ell-1) \cdot (\gamma_{p-1}+\gamma_p) \geq \sum_{i=1}^{p}{(1+\gamma_i)} + (\ell-1)\gamma_{p-1}.$$
	So we can apply Lemma \ref{lem:number_partition_tree_embedding} with $s_i = |S_i|$ ($1 \leq i \leq \ell$) to obtain a partition $[p] = J_1 \cup \dots \cup J_{\ell}$ such that $|S_i| \geq \sum_{j \in J_i}{(1 + \gamma_j)}$. But this means that we can embed $T$ into $G$ by mapping $a$ to $u$ and
	$\bigcup_{j \in J_i}{(\{b_j\} \cup C_j)}$ to $S_i$ for every $1 \leq i \leq \ell$. This is a contradiction, and hence $\ell = 1$. This means that $W$ is a clique. Moreover, plugging $\ell = 1$ into \eqref{eq:size_of_V1} gives $|W| = k-2$. Now, $W \cup \{u\}$ is a clique of size $k-1$. This means that $G$ has no vertices other than $W \cup \{u\}$, as otherwise, by the connectivity of $G$, there would be a vertex outside $W \cup \{u\}$ adjacent to a vertex in $W \cup \{u\}$, and then we could embed $T$ into $G$ by using this vertex and $W \cup \{u\}$ (we could in fact embed any $k$-vertex tree in this case). So $G$ is indeed a clique of size $k-1$, completing the proof.
\end{proof}

\section{A Polynomial Algorithm for Star Forests}\label{sec:star forest}
In this section we prove the positive direction of Theorem \ref{thm:forest}, which we rephrase as follows.
\begin{theorem}\label{thm:star_forest}
	For every star forest $H$, there is a polynomial-time algorithm that computes $\ex(G,H)$.
\end{theorem}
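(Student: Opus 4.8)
The plan is to reduce the computation of $\ex(G,H)$ for a star forest $H$ to (polynomially many instances of) the maximum degree‑constrained subgraph problem, which is solvable in polynomial time by Theorem~\ref{thm:f_factor}. The base case is when $H=K_{1,a}$ is a single star: here a graph is $H$‑free if and only if its maximum degree is at most $a-1$, so $\ex(G,K_{1,a})$ is exactly the quantity produced by Theorem~\ref{thm:f_factor} applied with $f\equiv a-1$. For a general star forest $H=K_{1,a_1}\cup\dots\cup K_{1,a_r}$ with $a_1\le\dots\le a_r$ and $k:=|V(H)|=r+\sum_i a_i$, I would argue by induction on the number of components $r$.

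The heart of the argument is the following structural lemma. Fix the constant $D:=2k$, and for a graph $G'$ let $X:=\{v\in V(G'):d_{G'}(v)\ge D\}$. Then $|X|\le r-1$, and $G'$ is $H$‑free if and only if $G'[V(G')\setminus X]$ is $H''$‑free, where $H'':=K_{1,a_1}\cup\dots\cup K_{1,a_{r-|X|}}$ is the union of the $r-|X|$ smallest stars of $H$. The bound $|X|\le r-1$ is immediate: $r$ vertices of degree $\ge D\ge k$ could be used as the centers of a greedily embedded copy of $H$, since at each step at most $k-1$ vertices have already been used. For the equivalence: if $G'[V\setminus X]\supseteq H''$ one extends this copy to a copy of $H$ by placing the $|X|$ largest stars of $H$ at the vertices of $X$, each of which has degree $\ge D$ and needs to avoid only the $\le k-1$ already used vertices; conversely, if $G'\supseteq H$ one first re‑routes the copy so that no vertex of $X$ is used as a leaf (a vertex of $X$ used as a leaf has degree $\ge D$, so it can instead serve as the center of its own star, with fresh leaves chosen outside $X$ and outside the current copy), after which the $\ge r-|X|$ stars of the copy that are centered outside $X$ lie entirely inside $V\setminus X$; a short domination argument — the $m$‑th smallest element of any sub‑multiset of $\{a_1,\dots,a_r\}$ is at least $a_m$ — then shows that the $r-|X|$ largest of these stars already realize $H''$ inside $G'[V\setminus X]$.

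Given the lemma, the algorithm guesses the set $X$ (at most $r-1$ vertices, hence $\poly(n)$ choices) together with, for each $x\in X$, a set $P_x$ of $D$ neighbours of $x$ in $G$ (again $\poly(n)$ choices, since $D,r$ are constants); it then forces into the subgraph all edges of $G[X]$ and all edges $xp$ with $x\in X,p\in P_x$. After these guesses, the requirement that $X$ be precisely the set of degree‑$\ge D$ vertices becomes a bounded list of degree caps on the vertices of $Y:=V\setminus X$, and, by the lemma, the $H$‑freeness of the whole subgraph becomes exactly the requirement that the chosen edges inside $Y$ induce an $H''$‑free graph. Since $H''$ has strictly fewer components than $H$, this is a smaller instance, and the induction should be carried on a suitably generalized problem: compute the maximum number of edges of a subgraph of $G$ that contains a prescribed set of forced edges, respects a prescribed (bounded) family of degree caps on various vertex subsets, and whose restriction to a prescribed subset is free of a given star forest. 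The competition between the optional $X$–$Y$ edges and the $Y$–$Y$ edges for the degree budget of $Y$‑vertices is absorbed into the degree‑constrained subgraph formulation, so that each terminal instance is handled by Theorem~\ref{thm:f_factor}.

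The main obstacle — and the most delicate point — is the base of this recursion when the current graph already has bounded maximum degree but the forbidden star forest still has $\ge 2$ components, so that the ``high‑degree peeling'' makes no progress. Bounded‑degree star‑forest‑free graphs are genuinely more than low‑degree graphs: for instance a centre joined to the midpoints of many private paths of length two is $(K_{1,2}\cup K_{1,2})$‑free, and it is found only by a secondary guess of the ``centre''. Resolving this requires a further structural dichotomy for a bounded‑degree graph that forbids $H''=K_{1,a_1}\cup\dots\cup K_{1,a_{r'}}$: either a bounded set of vertices covers all copies of the smallest star $K_{1,a_1}$ — and then removing this set leaves a graph of maximum degree below $a_1$, which is a pure degree cap — or, by a packing/covering argument that exploits the bounded degree and the bounded size of each star, the graph already contains all of $H''$. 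Incorporating this second case (guessing the covering set as an extra bounded guess, and bookkeeping the resulting bounded family of layered degree constraints so that everything can be fed into Theorem~\ref{thm:f_factor}) is the technical core of the proof.
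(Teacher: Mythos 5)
Your top-level reduction is sound: the lemma ``an $H$-free subgraph is exactly one whose restriction to the complement of its (at most $r-1$) vertices of degree $\ge D$ is free of the $r-|X|$ smallest stars'' is correct (the greedy embedding, re-routing and domination arguments all go through, possibly with a slightly larger constant than $D=2k$), and guessing $X$ and recursing on a star forest with fewer components is a legitimate alternative to the paper's route, which instead peels a \emph{single} vertex at a time using an optimality/exchange argument (Lemma \ref{prop:structured_extremal_subgraphs}) together with the observation that an optimum of maximum degree below $t_1$ is captured by a pure degree-cap computation. However, your proposal has a genuine gap exactly where you flag ``the technical core'': the case in which the high-degree peeling makes no progress, i.e.\ the optimal subgraph has bounded maximum degree while the forbidden forest still has several components. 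First, note this case is not the same as ``the current \emph{graph} has bounded maximum degree'': the host can have huge degrees while the extremal $F$ has $\Delta(F)<D$, so your recursion does not automatically land on a bounded-degree instance; bridging this needs either an exchange argument as in Lemma \ref{prop:structured_extremal_subgraphs} or a separate argument that enumerating low-degree attachments is still polynomial.

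Second, and more importantly, the dichotomy you propose for the bounded-degree case is false as stated. You claim that a bounded-degree $H''$-free graph either has a bounded set of vertices covering all copies of the \emph{smallest} star $K_{1,a_1}$, or already contains $H''$. Take $H''=K_{1,1}\cup K_{1,5}$ and let $F$ be a perfect matching: $F$ is $H''$-free (it has no $K_{1,5}$), it contains $n/2$ pairwise disjoint copies of the smallest star $K_{1,1}$, so no bounded set covers them, and it certainly does not contain $H''$; both horns fail. The correct threshold cannot be the smallest star of $H''$: one must first fix the largest prefix $H_i$ (components ordered by \emph{decreasing} size) that the subgraph already contains, and only then is the number of vertices of degree at least $t_{i+1}$ (the size of the \emph{next} star) bounded --- this is precisely Lemma \ref{lem:low_max_degree} of the paper, and the algorithm must then enumerate this bounded set together with all of its incident edges and complete the remaining low-degree part via Theorem \ref{thm:f_factor}, with the degree caps $t_{i+1}-1-d_{F'}(w)$ depending on the guessed prefix (Lemma \ref{lem:star-forest_low_max_degree}). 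Since your sketch for this case both uses the wrong threshold and is explicitly left unproven, the proposal as written does not yet constitute a proof of the theorem.
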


Let us introduce some notation that we will use throughout the section.
Let $H$ be a star forest.
We may assume that $H$ has no isolated vertices; indeed, if we let $K$ be the graph obtained from $H$ by deleting all isolated vertices, then for every graph $G$ on at least $v(H)$ vertices, we have $\ex(G,H) = \ex(G,K)$.
Let $S_1,\dots,S_r$ denote the components of $H$, let $t_i$ be the number of leaves in $S_i$, and assume that $t_1 \geq \dots \geq t_r \geq 1$. 
For each $1 \leq i \leq r$, we denote by $H_i$ the star forest whose connected components are $S_1,\dots,S_i$ (so $H_r = H$). It will be convenient to denote the empty graph by $H_0$.
For a graph $G$, we use $\Delta(G)$ to denote the maximum degree of \nolinebreak $G$.
\begin{lemma}\label{lem:low_max_degree}
	Let $F$ be an $H$-free graph, and let $0 \leq i \leq r-1$ be the largest integer for which $F$ contains a copy of $H_i$. Then $F$ contains at most $(\Delta(F)+1)v(H)$ vertices of degree at least $t_{i+1}$.
\end{lemma}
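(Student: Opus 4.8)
The plan is to argue by contradiction. Since $i \le r-1$, the star forest $H_{i+1}$, its last component $S_{i+1}$, and the number $t_{i+1}$ of leaves of $S_{i+1}$ are all well defined, and by the maximality of $i$ (indeed already by $H$-freeness, as $H_{i+1}$ is a subgraph of $H$) the graph $F$ contains no copy of $H_{i+1}$. Suppose, for contradiction, that $F$ has more than $(\Delta(F)+1)v(H)$ vertices of degree at least $t_{i+1}$; I will produce a copy of $H_{i+1}$ in $F$. First fix a copy of $H_i$ in $F$ and let $U$ be its vertex set; then $|U| = v(H_i) = \sum_{j=1}^{i}(t_j+1) \le v(H)$ (with $U = \emptyset$ when $i=0$). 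To extend this copy to one of $H_{i+1}$ it suffices to exhibit a vertex $c \notin U$ together with $t_{i+1}$ neighbours of $c$, all outside $U$: this $(t_{i+1}+1)$-set then carries a copy of $S_{i+1}$ disjoint from $U$, which adjoined to the fixed copy of $H_i$ yields $H_{i+1}$.

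The key point is that the set $U \cup N(U)$ of vertices of $F$ at distance at most $1$ from $U$ satisfies $|U \cup N(U)| \le |U| + |U| \cdot \Delta(F) \le v(H)(\Delta(F)+1)$, since each of the at most $v(H)$ vertices of $U$ has at most $\Delta(F)$ neighbours. Let $X$ be the set of vertices of $F$ of degree at least $t_{i+1}$; by assumption $|X| > (\Delta(F)+1)v(H) \ge |U \cup N(U)|$, so there is some $c \in X$ with $c \notin U$ and $N(c) \cap U = \emptyset$. As $d_F(c) \ge t_{i+1}$, pick any $t_{i+1}$ neighbours of $c$; together with $c$ they form a copy of $S_{i+1}$ whose vertex set is disjoint from $U$. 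This gives a copy of $H_{i+1}$ in $F$, the desired contradiction, so $F$ has at most $(\Delta(F)+1)v(H)$ vertices of degree at least $t_{i+1}$.

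I do not anticipate a genuine obstacle here; the argument is a short counting-plus-greedy-embedding. The only things to be careful about are the bookkeeping $v(H_i)\le v(H)$ (immediate, using that we have already discarded isolated vertices of $H$), the degenerate case $i=0$ where $H_i$ is empty and $U=\emptyset$, and the observation that one really only needs a vertex of $X$ at distance $\ge 2$ from $U$, so that all of the $\ge t_{i+1}$ neighbours used for the new star automatically avoid $U$. Note that the ordering $t_1 \ge \dots \ge t_r$ is not used in this particular argument — only the fact that $H_i$ consists of $i$ of the components of $H$.
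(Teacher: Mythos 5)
Your proof is correct and takes essentially the same approach as the paper: you both observe that every vertex of degree at least $t_{i+1}$ must lie within distance one of the fixed copy of $H_i$ (otherwise it extends to a disjoint copy of $S_{i+1}$ and hence a copy of $H_{i+1}$), and then count $|U \cup N(U)| \le (\Delta(F)+1)v(H)$; you phrase this by contradiction while the paper bounds the count directly, which is an immaterial difference. One small slip worth flagging: your parenthetical remark that $H$-freeness alone already rules out $H_{i+1}$ ``as $H_{i+1}$ is a subgraph of $H$'' has the implication backwards when $i+1 < r$ (a subgraph of $H$ can certainly appear in an $H$-free graph); the maximality of $i$, which you also invoke, is the correct and necessary justification.
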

\begin{proof}
	Suppose first that $r=1$, and so $i=0$. Then $H=S_1=K_{1,t_1}$, and $F$ contains no vertices of degree at least $t_1$, because $F$ is $H$-free.
	Suppose now that $r \geq 2$.
	By assumption, $F$ contains a copy of $H_i$.
	Let $X$ be the vertex-set of such a copy.
	Let $y_1,\dots,y_{\ell}$ be the vertices in $V(F) \setminus X$ which have degree at least $t_{i+1}$ in $F$. 
	Since $F$ is $H_{i+1}$-free, each $y_i$ must have a neighbour in $X$. By averaging, there is a vertex $x \in X$ adjacent to at least $\ell/|X|$ of the vertices $y_1,\dots,y_{\ell}$. Hence, $\ell \leq |X| \cdot \Delta(F)$. 
	So the number of vertices of $F$ of degree at least $t_{i+1}$ is at most
	$|X| + \ell = v(H_i) + \ell \leq (\Delta(F)+1)v(H)$.
\end{proof}

In the following lemma, we prove Theorem \ref{thm:star_forest}
in the case that the input graph has bounded maximum degree.
\begin{lemma}\label{lem:star-forest_low_max_degree}
	For each constant $C > 0$ there is an algorithm which runs in time $n^{O(v(H) \cdot C)}$ and computes $\ex(G,H)$ for input graphs $G$ with $\Delta(G) \leq C$.
\end{lemma}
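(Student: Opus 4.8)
The plan is to reduce the problem of computing $\ex(G,H)$ on bounded-degree graphs to a bounded-treewidth optimization problem, or alternatively to handle it directly by a careful brute-force over the ``heavy'' part of an optimal solution. Since $\Delta(G) \leq C$, any $H$-free subgraph $F \subseteq G$ also has $\Delta(F) \leq C$, so by Lemma~\ref{lem:low_max_degree}, if $F$ realizes the maximum and $i$ is the largest index with $H_i \subseteq F$, then $F$ has at most $(C+1)v(H)$ vertices of degree $\geq t_{i+1}$. The idea is to guess the index $i$ (there are only $r+1 \leq v(H)$ choices), guess the set $Y$ of high-degree vertices of $F$ together with their incident edges in $F$ — there are at most $n^{O(Cv(H))}$ choices for $Y$ and, since each has degree $\leq C$, at most $n^{O(C^2 v(H))}$ choices for the edges at $Y$ — and then we are left to choose, among the remaining edges of $G$, a maximum set whose addition keeps every vertex outside $Y$ of degree $< t_{i+1}$ and creates no copy of $H$ and no copy of $H_{i+1}$.

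First I would set up this enumeration precisely, and then argue that once $Y$ and the edges at $Y$ are fixed, the residual problem on $G - Y$ is exactly a bounded-degree constraint: we must pick a maximum edge subset of $G - Y$ in which every vertex has degree strictly less than $t_{i+1}$, subject additionally to not completing $H_{i+1}$ using the already-committed copy of $H_i$ inside $Y$. The degree bound alone is a $(t_{i+1}-1)$-factor-type problem, solvable by Theorem~\ref{thm:f_factor}. The subtlety is the extra ``no new copy of $H_{i+1}$'' constraint, and here I would use that $H_{i+1}$ is a single star $S_{i+1}$ with $t_{i+1}$ leaves: a new copy of $S_{i+1}$ in $(G-Y)$-plus-chosen-edges together with the embedded $H_i$ in $Y$ would need a vertex of degree $\geq t_{i+1}$ in the chosen subgraph, but we have already forbidden all such vertices outside $Y$. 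So in fact the degree constraint $d_F(v) < t_{i+1}$ for all $v \notin Y$ automatically prevents creating $H_{i+1}$ (hence $H$) beyond what is already forced by $Y$; one only needs to verify that the committed edges at $Y$ together with the residual choice do not themselves form $H$, which depends only on $Y$ and can be checked after the fact, or built into the guessing phase.

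The concrete algorithm is then: for each $i$, for each candidate $Y$ with $|Y| \leq (C+1)v(H)$, for each choice of edges of $G$ with both endpoints in $Y$ or one endpoint in $Y$ (respecting $\Delta \leq C$), check that this partial graph contains a copy of $H_i$ but that its closure under the degree rule cannot contain $H$; then run the $f$-factor algorithm of Theorem~\ref{thm:f_factor} on $G - Y$ with $f(v) = t_{i+1} - 1$ to get the maximum number of additional edges; sum, and take the overall maximum over all guesses. Total running time is $n^{O(C^2 v(H))}$ times a polynomial, which is $n^{O(v(H)\cdot C)}$ up to the exact constant in the exponent (one can absorb the $C$ in the $O(\cdot)$), matching the claimed bound. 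Correctness in one direction is immediate since every $H$-free $F$ arises from some guess; in the other direction, every subgraph produced is $H$-free by the degree argument above.

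The main obstacle I anticipate is the careful bookkeeping around the ``no new copy of $H$'' condition: one must be sure that the only way a copy of $H = H_r$ can appear is by having, somewhere, a vertex of degree $\geq t_r \geq t_{i+1}$ playing the center of the smallest star $S_r$ — since $t_r$ is the minimum leaf-count — or more precisely that an increasing chain $H_0 \subset H_1 \subset \dots$ of subconfigurations is blocked as soon as the degree cap below $Y$ is below $t_{i+1}$. Making this monotonicity argument airtight — i.e., that forbidding degree $\geq t_{i+1}$ outside $Y$ and committing to a copy of $H_i$ inside $Y$ exactly captures ``contains $H_i$ but not $H_{i+1}$'' in a way compatible with the $f$-factor formulation — is the delicate point; everything else is routine enumeration plus an invocation of Theorem~\ref{thm:f_factor}.
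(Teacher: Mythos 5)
Your overall strategy---enumerate the small set $Y$ of high-degree vertices of an optimal $H$-free subgraph (its size bounded via Lemma \ref{lem:low_max_degree}), commit to the edges incident to $Y$, and finish with the degree-constrained subgraph algorithm of Theorem \ref{thm:f_factor}---is the same as the paper's. But your concrete algorithm has a genuine flaw: you call Theorem \ref{thm:f_factor} on $G-Y$ with the uniform budget $f(v)=t_{i+1}-1$, which caps only the degree of $v$ inside $G-Y$ and ignores the committed edges from $v$ to $Y$. A vertex outside $Y$ can then reach total degree at least $t_{i+1}$ and become the center of a forbidden star, so the produced subgraph need not be $H$-free and the output can overshoot $\ex(G,H)$. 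Concretely, take $H=2K_{1,2}$ and $G$ the disjoint union of the paths $y_1y_2y_3$ and $y_4vw$: guessing $Y=\{y_1,y_2,y_3,y_4\}$ and committing all three edges at $Y$ (so $i=1$, and this committed graph is $H$-free), your residual step adds $vw$, giving value $4$, while $\ex(G,H)=3$. Checking ``after the fact'' and discarding such guesses does not repair this, since then the guess coming from an actual optimal $F$ may also be discarded and the lower bound on the output breaks. The fix is exactly what the paper does: set $f(w)=t_{i+1}-1-d_{F'}(w)$, i.e., enforce the cap on the \emph{total} degree of $w$ in the union---which is what your own first paragraph states informally, but your formal step does not implement.

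The second issue is the point you yourself flag as delicate and leave unresolved: your test ``the closure under the degree rule cannot contain $H$'' is not a well-defined, implementable check, and your justification (any new copy of $H_{i+1}$ needs a star center of degree $\geq t_{i+1}$ outside $Y$) is insufficient as stated. The argument only works if the committed graph $F'$ is itself $H_{i+1}$-free: since $t_1\geq\dots\geq t_{i+1}$, under the total-degree cap every star center of a copy of $H_{i+1}$ in the final graph must lie in $Y$, hence the whole copy lies in $F'$, which is a contradiction precisely when $i$ is the \emph{largest} index with $H_i\subseteq F'$ and $F'$ has been verified to be $H$-free (this is how the paper sets it up, computing $i$ from $F'$ rather than guessing it). If you merely guess some $i$ with $H_i\subseteq F'$, a committed copy of $H_{i+1}$ together with small stars $S_{i+2},\dots,S_r$ (whose $t_j$ may be strictly less than $t_{i+1}$) centered at low-degree vertices outside $Y$ can assemble a copy of $H$ that your degree argument does not exclude; e.g.\ $H=K_{1,3}\cup K_{1,3}\cup K_{1,1}$ with $i=1$ guessed and $2K_{1,3}\subseteq F'$. (A minor further point: once $Y$ is fixed there are only $2^{O(C^2v(H))}=O(1)$ choices of committed edges, not $n^{O(C^2v(H))}$; this is how the claimed $n^{O(v(H)\cdot C)}$ running time actually arises.) With these two repairs your argument coincides with the paper's proof.
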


\begin{proof}
	The algorithm works as follows: Go over all sets $U \subseteq V(G)$ of size at most $(C+1)v(H)$. For each such $U$, go over all (spanning) subgraphs $F'$ of $G$ in which $V(G) \setminus U$ is an independent set, and do the following:
	\begin{enumerate}
		\item Check whether $F'$ is $H$-free. If not, continue to the next graph.
		\item Find the maximum $0 \leq i \leq r-1$ such that $H_i$ is a subgraph of $F'$.
		\item Check whether $d_{F'}(w) \leq t_{i+1}-1$ for every $w \in V(G) \setminus U$. If not, continue to the next graph.
		\item Let $f : V(G)\setminus U \rightarrow \mathbb{N}$ be the function $f(w) = t_{i+1} - 1 - d_{F'}(w) \geq 0$. Use Theorem \ref{thm:f_factor} to compute the maximum number of edges $m$ in a (spanning) subgraph $F''$ of $G[V(G) \setminus U]$ with the property that $d_{F''}(w) \leq f(w)$ for every $w \in V(G) \setminus U$. Define $M(U,F') := e(F') + m$.
	\end{enumerate}
	Output the maximum of $M(U,F')$ over all $U,F'$ as above (which pass the tests in Items 1 and 3).
	
	Before proving the correctness of the above algorithm, let us consider its running time. It is easy to see that for each choice of $U,F'$ (as described above), we can execute steps 1-4 in time $n^{O(v(H))}$. Also, the number of choices of $U$ is
	$\sum_{i=0}^{(C+1)v(H)}\binom{n}{i} \leq n^{O(v(H) \cdot C)}$, and for each choice of $U$ there are only $O(1)$ choices for $F'$, because $|U| = O(1)$, $\Delta(G) \leq C = O(1)$, and we require that $V(G) \setminus U$ is an independent set in $F'$. So the running time of the algorithm is indeed $n^{O(v(H) \cdot C)}$, as required.
	
	Let us now show that the above algorithm correctly computes $\ex(G,H)$. First we show that for every $U,F'$ which pass the tests in Items 1 and 3, it holds that $M(U,F') \leq \ex(G,H)$.
	So let $U$ be a subset of $V(G)$ of size at most $(C+1)v(H)$ and let $F'$ be an $H$-free subgraph of $G$ such that
	$F'[V(G) \setminus U]$ is an independent set. Let $0 \leq i \leq r-1$ be the largest integer such that $H_i$ is a subgraph of $F'$ (as in Item 2). Suppose that $d_{F'}(w) \leq t_{i+1}-1$ for every $w \in V(G) \setminus U$, and set $f(w) = t_{i+1} - 1 - d_{F'}(w)$. Let $F''$ be a subgraph of $G[V(G) \setminus U]$ satisfying $d_{F''}(w) \leq f(w)$ for all $w \in V(G) \setminus U$, and having the maximum number of edges among all subgraphs with this property.
	Then $M(U,F') = e(F') + e(F'')$.
	Let $F$ be the union of $F'$ and $F''$. Then $d_F(w) = d_{F'}(w) + d_{F''}(w) \leq t_{i+1}-1$ for each $w \in V(G) \setminus U$. This implies that for every $1 \leq j \leq i+1$, there is no copy of $K_{1,t_j}$ in $F$ whose center is in $V(G) \setminus U$. Hence, every copy of $H_{i+1}$ in $F$ is also contained in $F'$. But $F'$ is $H_{i+1}$-free by our choice of $i$, so $F$ is $H_{i+1}$-free and hence also $H$-free. So we see that $M(U,F') = e(F') + e(F'') = e(F) \leq \ex(G,H)$. This shows that the value outputted by the algorithm is at most $\ex(G,H)$.
	
	For the other direction, let $F$ be an $H$-free subgraph of $G$ such that $e(F) = \ex(G,H)$. Let $0 \leq i \leq r-1$ be the largest integer for which $H_i$ is a subgraph of $F$, and let $U$ be the set of all vertices $v \in V(G)$ satisfying $d_F(v) \geq t_{i+1}$. By Lemma \ref{lem:low_max_degree} and the assumption
	$\Delta(G) \leq C$, we have that $|U| \leq (\Delta(F) + 1)v(H) \leq (C+1)v(H)$.
	Let $F'$ be the subgraph of $F$ obtained by deleting all edges of $F$ which are contained in $V(G) \setminus U$. Clearly, $F' \subseteq F$ is $H_{i+1}$-free, and hence $H$-free. Also, $d_{F'}(w) \leq d_F(w) \leq t_{i+1}-1$ for every $w \in V(G) \setminus U$. By the choice of $i$, $F$ contains a copy of $H_i$. Also, for each $1 \leq j \leq i$, there is no copy of $K_{1,t_j}$ in $F$ whose center is in $V(G) \setminus U$. Therefore, this copy of $H_i$ is also contained in $F'$. It follows that $i$ is also the largest integer for which $F'$ contains a copy of $H_i$.
	Hence, the pair $(U,F')$ passes the tests in Items 1 and 3. Now, setting $f(w) = t_{i+1}-1-d_{F'}(w)$ for each $w \in V(G) \setminus U$, we see that $F'' := F[V(G) \setminus U]$ is a subgraph of $G[V(G) \setminus U]$ satisfying $d_{F''}(w) \leq f(w)$ for each $w \in V(G) \setminus U$. This implies that $M(U,F') \geq e(F') + e(F'') = e(F) = \ex(G,H)$. So we see that the value outputted by the algorithm is at least $\ex(G,H)$. This completes the proof.
\end{proof}

To handle input graphs with large maximum degree, we need the following lemma. Let $H'$ denote the star forest with components $S_2,\dots,S_r$ (i.e. $H' = H - S_1$).

\begin{lemma}\label{prop:structured_extremal_subgraphs}
	There is $D = D(H)$ such that for every graph $G$ with $\Delta(G) \geq D$ and for every $H$-free spanning subgraph $F$ of $G$ satisfying $e(F) = \ex(G,H)$ and $\Delta(F) \geq t_1$, there is a vertex $v \in V(G)$ such that $F - v$ is $H'$-free.
\end{lemma}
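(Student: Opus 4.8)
The plan is to show that, with $D$ any sufficiently large constant depending only on $v(H)$ (say $D=v(H)^2$), in fact $\Delta(F)\geq v(H)-1$ must hold, and then to extract the vertex $v$ from a one-line counting argument. I will use two elementary facts. (i) Since $t_1\geq\dots\geq t_r$, any $r-1$ of the stars $S_1,\dots,S_r$ already contain a copy of $H'$; hence, if $\Gamma$ is a graph and $z\in V(\Gamma)$ is such that $\Gamma-z$ is $H'$-free, then $\Gamma$ is $H$-free — any copy of $H$ in $\Gamma$ would have to use $z$, and deleting the component of that copy through $z$ would leave $r-1$ intact star-components, i.e.\ a copy of $H'$, inside $\Gamma-z$. (ii) Picking a vertex $w$ with $d_F(w)=\Delta(F)$ (note $d_F(w)\geq t_1$ by hypothesis), the closed neighbourhood $N_F[w]$ meets every copy of $H'$ in $F$; otherwise $w$ together with $t_1$ of its neighbours would span a copy of $S_1$ disjoint from such a copy of $H'$, producing a copy of $H$ in the $H$-free graph $F$.

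Suppose first that $\Delta(F)\geq v(H)-1$. I claim $F-w$ is $H'$-free, which proves the lemma with $v=w$. Indeed, if $X$ were the vertex set of a copy of $H'$ in $F-w$ then $w\notin X$, and as in (ii) every $t_1$-subset of $N_F(w)$ must meet $X$, so
\[
  |N_F(w)\cap X|\;\geq\;d_F(w)-t_1+1\;\geq\;v(H)-t_1\;=\;v(H')+1\;>\;|X|,
\]
which is absurd.

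It remains to rule out the case $t_1\leq\Delta(F)\leq v(H)-2$. Put $Z:=N_F[w]$, so $|Z|=\Delta(F)+1\leq v(H)-1$ and, by (ii), $F-Z$ is $H'$-free. Fix a vertex $z\in V(G)$ with $d_G(z)\geq D$ and consider the spanning subgraph
\[
  \widehat F \;:=\; F\bigl[V(G)\setminus(Z\cup\{z\})\bigr]\;\cup\;\{\,zu : u\in N_G(z)\,\}
\]
of $G$. Its vertex-deleted graph $\widehat F-z=F[V(G)\setminus(Z\cup\{z\})]$ is an induced subgraph of the $H'$-free graph $F-Z$, hence is $H'$-free, so $\widehat F$ is $H$-free by (i). On the other hand, every edge of $F$ absent from $\widehat F$ is incident to the set $Z\cup\{z\}$, which has at most $v(H)$ vertices, each of $F$-degree at most $v(H)-2$; therefore
\[
  e(\widehat F)\;\geq\;e(F)-v(H)(v(H)-2)+d_G(z)\;\geq\;\ex(G,H)-v(H)(v(H)-2)+D,
\]
which exceeds $\ex(G,H)=e(F)$ as soon as $D>v(H)(v(H)-2)$. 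Since $\widehat F$ is an $H$-free subgraph of $G$, this contradicts the maximality of $F$, so this case does not occur.

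I do not anticipate a real obstacle beyond pinning down facts (i) and (ii): fact (i) is the only point where the ordering $t_1\geq\dots\geq t_r$ is used, and it is what both licenses re-attaching the whole $G$-star at $z$ in the last step and, ultimately, reduces the lemma to the degree dichotomy above. (Implicit throughout is $r\geq 2$, so that $H'$ is a genuine graph; for $r=1$ one has $H=S_1$ and $\ex(G,H)$ is computed directly by Theorem \ref{thm:f_factor}.)
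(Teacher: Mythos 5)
Your proof is correct. It follows the paper's overall skeleton — an exchange argument that forces $\Delta(F)\geq v(H)-1$, followed by exactly the paper's final step (a vertex $w$ with $d_F(w)\geq v(H)-1=v(H')+t_1$ cannot leave a copy of $H'$ in $F-w$, since one could complete it with $t_1$ unused neighbours of $w$ to a copy of $H$) — but the middle step is implemented by a genuinely different mechanism. The paper rules out $t_1\leq\Delta(F)\leq v(H)-2$ by deleting all $F$-edges at the set $U$ of vertices of degree at least $t_{i+1}$ (with $i$ the largest index such that $H_i\subseteq F$), bounding $|U|\leq(\Delta(F)+1)v(H)$ via Lemma \ref{lem:low_max_degree}, adding the full $G$-star at a maximum-$G$-degree vertex, and verifying $H$-freeness through the observation that every copy of $K_{1,t_{i+1}}$ in the modified graph must pass through that vertex. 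You instead delete only the edges meeting $N_F[w]\cup\{z\}$, where $w$ has maximum $F$-degree and $d_G(z)\geq D$, and your facts (i) and (ii) — that $N_F[w]$ meets every copy of $H'$ in $F$, and that a graph is $H$-free whenever deleting one vertex makes it $H'$-free (using $t_1\geq\dots\geq t_r$) — replace both Lemma \ref{lem:low_max_degree} and the $H_i$/$t_{i+1}$ bookkeeping. This buys a smaller deleted set (at most $v(H)$ vertices rather than up to $(\Delta(F)+1)v(H)$), hence a cleaner count and a smaller constant ($D$ of order $v(H)^2$ versus the paper's $d(d+1)v(H)+d$, roughly $v(H)^3$), and it makes the lemma self-contained; the paper's route reuses Lemma \ref{lem:low_max_degree}, which it needs anyway for the bounded-degree algorithm, so neither presentation is wasteful. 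Your closing remark that $r\geq 2$ is implicitly assumed matches the paper, whose proof (and use of the lemma) likewise presupposes $r\geq 2$.
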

\begin{proof}
	We prove the proposition with
	$$
	D = D(H) := d(d+1)v(H) + d,
	$$
	where $d := v(H) - 1 = v(H') + t_1$.
	Let $w \in V(G)$ be a vertex of maximum degree, i.e., $d_G(w) = \Delta(G)$.
	By assumption, $d_G(w) \geq D$.
	Let $F$ be an $H$-free (spanning) subgraph of $G$ satisfying $e(F) = \ex(G,H)$ and $\Delta(F) \geq t_1$. First we show that $\Delta(F) \geq d$. Suppose otherwise. Let $1 \leq i \leq r-1$ be the largest integer for which $H_i$ is a subgraph of $F$ (note that $i$ is well-defined since by assumption we have $H_1=S_1 \subseteq F$ as $\Delta(F) \geq t_1$).
	Let $$U = \{u \in V(G)\setminus \{w\} : d_F(u) \geq t_{i+1}\}.$$
	By Lemma \ref{lem:low_max_degree} we have
	$|U| \leq (\Delta(F) + 1)v(H) < (d+1)v(H)$.
	Now let $F'$ be the graph obtained from $F$ by deleting all edges incident to vertices of $U$, and then adding all edges of $G$ incident to \nolinebreak $w$. \nolinebreak Then
	$$
	e(F') > e(F) - d|U| + (d_G(w) - d) > e(F) - d(d+1)v(H) + (D - d) = e(F),
	$$
	where in the first two inequalities we used the assumptions $\Delta(F) < d$ and $d_G(w) \geq D$, and in the equality we used our choice of $D$.
	As $e(F') > e(F) = \ex(G,H)$, $F'$ must contain a copy of $H$. Note, however, that if
	$u \in V(G)$ satisfies $d_{F'}(u) \geq t_{i+1}$ then either $u = w$, or $d_{F'}(u) = t_{i+1}$ and $wu \in E(F')$. Hence, every copy of $K_{1,t_{i+1}}$ in $F'$ must contain $w$, so $F'$ does not contain two disjoint copies of $K_{1,t_{i+1}}$. But this means that $F'$ is $H$-free, because $H$ contains the disjoint union of $K_{1,t_i}$ and $K_{1,t_{i+1}}$ and $t_i \geq t_{i+1}$. This contradiction shows that $\Delta(F) \geq d$, as claimed. 
	
	Let $v \in V(F)$ with $d_F(v) \geq d$. Suppose by contradiction that $F - v$ contains a copy of $H'$. As $d_F(v) \geq d = v(H') + t_1$, we can find $t_1$ neighbors of $v$ which do no participate in this copy of $H'$. This gives a copy of $H$ in $F$, a contradiction. This completes the proof of the lemma.
\end{proof}

\begin{proof}[Proof of Theorem \ref{thm:star_forest}]
	The proof is by induction on $r$. The base case $r=1$ follows from Theorem \ref{thm:f_factor}. Suppose now that $r \geq 2$.
	Let $D = D(H)$ be the constant given by Lemma \ref{prop:structured_extremal_subgraphs}.
	The algorithm computes $\Delta(G)$ and proceeds as follows:
	\begin{itemize}
		\item If $\Delta(G) < D$ then compute $\ex(G,H)$ using the algorithm given by Lemma \ref{lem:star-forest_low_max_degree}.
		\item If $\Delta(G) \geq D$, compute $M_1 := \ex(G,K_{1,t_1})$ (namely, the maximum number of edges in a subgraph of $G$ with maximum degree at most $t_1-1$), and
		$M_2 := \max_{u \in V(G)}{\left[ d_G(u) + \ex(G - u, H') \right]}$. Output $M := \max\{M_1,M_2\}$. Note that $M_1$ can be computed in polynomial time by Theorem \ref{thm:f_factor}, and $M_2$ can be computed in polynomial time by the induction hypothesis for $r-1$.
	\end{itemize}
	Let us show that the above algorithm correctly computes $\ex(G,H)$. In the first item this is clearly the case. So suppose that $\Delta(G) \geq D$.
	Our goal is to show that $\ex(G,H) = M$.
	As every $K_{1,t_1}$-free graph is also $H$-free, it follows that $\ex(G,H) \geq M_1$.
	Now, for each $u \in V(G)$, let $F'_u$ be an $H'$-free subgraph of $G-u$ with $e(F'_u) = \ex(G-u,H')$. Let $F_u$ be the subgraph of $G$ consisting of $F'_u$ and all edges of $G$ touching $u$. So $e(F_u) = d_G(u) + \ex(G - u,H')$.
	Then $F_u$ is $H$-free, because $F'_u = F_u - u$ is $H'$-free.
	Hence, $\ex(G,H) \geq e(F_u)$. This shows that $\ex(G,H) \geq \max_{u \in V(G)}{e(F_u)} = M_2$.
	We conclude that $\ex(G,H) \geq \max\{M_1,M_2\} = M$.
	
	Next we show that $M \geq \ex(G,H)$. Let $F$ be a spanning $H$-free subgraph of $G$ with $e(F) = \ex(G,H)$. If $\Delta(F) \leq t_1-1$ then $e(F) \leq \ex(G,K_{1,t_1}) = M_1 \leq M$.
	Suppose now that $\Delta(F) \geq t_1$. Then by Lemma \ref{prop:structured_extremal_subgraphs}, there is $v \in V(G)$ such that $F - v$ is $H'$-free.
	This implies that $e(F) \leq d_G(v) + \ex(G-v,H') \leq M_2 \leq M$.
	We conclude that $\ex(G,H) = e(F) \leq M$, as required.
\end{proof}

\section{Hardness for Forests}\label{sec:disconnected}
In this section we prove the negative direction of Theorem \ref{thm:forest}, which we rephrase as follows.
\begin{theorem}\label{thm:disconnected}
	Let $H$ be a forest one of whose connected components is not a star. Then computing $\ex(G,H)$ is NP-hard.
\end{theorem}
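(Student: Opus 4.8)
The plan is to reduce from the tree case, Theorem~\ref{thm:tree}, in the same spirit as the reduction in Section~\ref{sec:proofmain} that passed from $T_0$ to $T$. Write $H = T \cup F'$, where $T$ is a connected component of $H$ that is not a star (so by Theorem~\ref{thm:tree} computing $\rem_T(\cdot)$, equivalently $\ex(\cdot,T)$, is NP-hard), and $F'$ is the forest consisting of the remaining components of $H$. As before we may assume $H$ has no isolated vertices. Given an $n$-vertex graph $G$ as an input instance for computing $\ex(G,T)$, I would construct $G'$ by taking $G$ together with a ``padding gadget'': a disjoint collection of cliques (or some other fixed graph) large enough and numerous enough that $G'$ always contains a copy of $F'$ that is vertex-disjoint from $G$, while on the other hand the gadget by itself is $T$-free and, more importantly, cannot supply a copy of $T$ even after we touch it. Concretely, attach to every vertex $v \in V(G)$ a large pendant star (as in Section~\ref{sec:proofmain}) so that deleting few edges still leaves every vertex of $G$ with many degree-$1$ neighbors, and in addition add a single fixed disjoint copy $D_0$ of a $T$-free graph that contains $F'$ (for instance, take the components of $F'$ themselves, each one being a tree that is automatically $T$-free whenever $v(T) > v(F'_i)$ — if some component of $F'$ is as large as $T$ one simply takes that component on its own vertex set, which is $T$-free for any tree $T$ not equal to it, and since $T$ is not a star while that component might be, a tiny case check handles the coincidence).

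The key claim to establish is $\ex(G',H) = \ex(G,T) + c$ for an explicit constant $c$ depending only on $H$ (namely $c = e(D_0) + \sum_{v} |L_v|$ in the notation above), which is equivalent to $\rem_H(G') = \rem_T(G)$ after adjusting for the total edge count. The inequality $\ex(G',H) \ge \ex(G,T) + c$ is the easy direction: take an optimal $T$-free subgraph of $G$, keep all pendant-star edges and all of $D_0$; the result is $H$-free because any copy of $H$ would need a copy of $T$ somewhere, and the only place a copy of $T$ could sit is inside the $G$-part (the pendant leaves have degree $1$ and $D_0$ is too small/structured to contain $T$), but we chose that part $T$-free. For the reverse inequality $\ex(G',H) \le \ex(G,T) + c$: given an optimal $H$-free subgraph $F$ of $G'$, I claim $F$ restricted to $V(G)$ must be $T$-free. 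If it contained a copy of $T$ inside $V(G)$, then since $F$ keeps (after the usual argument that an optimal solution never deletes a ``free'' pendant edge, or at least deletes at most $\rem_T(G) \le \binom n2$ of them) enough of $D_0$ and enough pendant structure, we could extend this copy of $T$ to a copy of $F'$ disjoint from it, producing a copy of $H$ in $F$ — contradiction. Hence $e(F \cap G) \le \ex(G,T)$, and $e(F) \le e(F \cap G) + c \le \ex(G,T) + c$, using that $F$ has at most $c$ edges outside $G$ (all such edges lie in the pendant stars or in $D_0$).

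The main obstacle, as in every reduction of this type (and as the authors themselves emphasize in Subsection~\ref{subsec:previous}), is ruling out ``spurious'' copies of $H$ assembled from fragments of different gadget pieces together with fragments of $G$: a copy of $T$ could in principle use some vertices of $G$ and some pendant vertices, or a copy of $F'$ could straddle $D_0$ and the pendant stars. This is exactly why the padding must be chosen carefully. Using pendant stars whose leaves have degree exactly $1$ in $G'$ kills any copy of $T$ that tries to use them nontrivially, since $T$ is a tree with at least one vertex of degree $\ge 2$ at distance $\ge 2$ from a leaf (as $T$ is not a star and not an edge); and making $D_0$ a disjoint union of components each of which is $T$-free and of total size $< 2 v(T)$, placed on its own vertex set, prevents any copy of $T$ from using $D_0$ while still guaranteeing the disjoint copy of $F'$ we need for the extension argument. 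The one delicate point requiring a short separate argument is the case where a component of $F'$ has the same order as $T$; there one invokes that a tree on $m$ vertices is $T$-free for every tree $T$ on $m$ vertices with $T$ not isomorphic to it, and the remaining isomorphic case is handled by noting that then $H$ contains two vertex-disjoint copies of $T$ and one reduces instead to the analogue of the ``$K_{k-1}$-factor'' argument from Section~\ref{sec:proofmain}. Once the gadget is pinned down, the two inequalities above are routine, and the reduction is clearly polynomial-time computable, completing the proof.
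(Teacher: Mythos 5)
Your high-level plan (pad $G$ with a gadget that supplies the remaining components of $H$ and reduce from the tree case) is the same as the paper's, but the construction you describe does not work as stated, for three concrete reasons. First, the pendant stars are actively harmful here. In Section~\ref{sec:proofmain} they are sound because the reduction there is from $T_0$ (the tree $T$ with its leaves removed): a $T_0$-free core plus pendant leaves is $T$-free. You, however, reduce from $\ex(G,T)$ itself, and a $T$-free core plus pendant leaves need \emph{not} be $T$-free: for $T=P_4$, any surviving edge $uv$ of your optimal $T$-free subgraph $F^*$ together with one pendant leaf at $u$ and one at $v$ is a copy of $P_4$, which together with $D_0\supseteq F'$ yields $H$. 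So your ``easy direction'' witness is not $H$-free, and the claimed identity $\ex(G',H)=\ex(G,T)+c$ has no reason to hold with this gadget. Second, a \emph{single} copy $D_0$ of the padding graph is not enough: an optimal $H$-free subgraph of $G'$ is free to delete the $O(1)$ edges of $D_0$ (or a few of them) instead of making the $G$-part $T$-free, which is typically much cheaper; your appeal to ``the usual argument that an optimal solution never deletes a free pendant edge'' simply does not apply to $D_0$, and without many copies the reverse inequality fails (already for $H=P_4\cup K_{1,3}$ and $G=P_n$, with the gadget being one $K_{1,3}$, deleting a single gadget edge makes the graph $H$-free while keeping all of $G$). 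The paper's fix is to add $n^2$ disjoint copies of each padding tree, so that any optimal solution, which deletes at most $\binom{n}{2}<n^2$ edges, must leave one copy of each intact, and to add no pendant stars at all.

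Third, the interaction between components of $H$ is not handled. You pick $T$ as an arbitrary non-star component, so some other component of $H$ may properly \emph{contain} $T$ as a subgraph (e.g.\ a longer path); then no $T$-free gadget containing $F'$ exists, and your case analysis only touches components of the same order as $T$. When $H$ has several components isomorphic to $T$, saying one ``reduces to the analogue of the $K_{k-1}$-factor argument'' is not a proof: that argument rests on the Strong Erd\H{o}s--S\'os characterization of extremal $T$-free graphs and has no obvious analogue for $kT$-freeness. The paper needs a genuinely separate ingredient here: it first proves that $\ex(kG,kC)=\ex((k-1)G,kC)+\ex(G,C)$ for connected $C$, which gives NP-hardness of computing $\ex(\cdot,kT)$ (otherwise $\ex(G,T)=\ex(kG,kT)-\ex((k-1)G,kT)$ would be computable), then chooses $T_1$ to be a non-star component with the maximum number of edges among non-star components (so no non-isomorphic component of $H$ can contain it), and only then runs the padding reduction from $\ex(G,kT_1)$, with $n^2$ copies of each component not isomorphic to $T_1$. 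Repairing your proposal essentially forces you to introduce all three of these ingredients, so as written there is a genuine gap rather than a complete alternative proof.
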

\noindent
For a graph $G$ and an integer $k$, denote by $kG$ the disjoint union of $k$ copies of $G$.

\begin{lemma}\label{claim:kG}
	Let $C$ be a connected graph and $k \geq 2$.  Then for every graph $G$, it holds that
	$\ex(kG,kC) = \ex((k-1)G,kC) + \ex(G,C).$
\end{lemma}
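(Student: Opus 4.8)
The plan is to prove the two inequalities $\ex(kG,kC) \leq \ex((k-1)G,kC) + \ex(G,C)$ and $\ex(kG,kC) \geq \ex((k-1)G,kC) + \ex(G,C)$ separately. The lower bound is the easy direction: take a $kC$-free subgraph $F_1$ of $(k-1)G$ with $e(F_1) = \ex((k-1)G,kC)$ and a $C$-free subgraph $F_2$ of the remaining copy of $G$ with $e(F_2) = \ex(G,C)$, and let $F = F_1 \cup F_2$ inside $kG$. If $F$ contained $k$ vertex-disjoint copies of $C$, then since $F_2$ is $C$-free, none of these copies lies entirely in $F_2$; but each copy of $C$ is connected and the components of $kG$ are the $k$ copies of $G$, so each copy of $C$ lies inside a single copy of $G$. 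Hence all $k$ copies lie within the $k-1$ copies of $G$ spanned by $F_1$, so $F_1$ contains $kC$, a contradiction. Thus $F$ is $kC$-free and $\ex(kG,kC) \geq e(F) = \ex((k-1)G,kC) + \ex(G,C)$.

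For the upper bound, let $F$ be a $kC$-free subgraph of $kG$ with $e(F) = \ex(kG,kC)$. Write $kG = G^{(1)} \cup \dots \cup G^{(k)}$ for the $k$ copies of $G$, and let $F^{(i)} = F \cap G^{(i)}$, so $e(F) = \sum_{i=1}^k e(F^{(i)})$ (no edges run between distinct copies). The key observation is this: since $F$ is $kC$-free but each $F^{(i)}$ lives in a separate component, at least one of the $F^{(i)}$ must be $C$-free — indeed, if every $F^{(i)}$ contained a copy of $C$, picking one copy from each would yield $k$ vertex-disjoint copies of $C$ in $F$. Say $F^{(j)}$ is $C$-free; then $e(F^{(j)}) \leq \ex(G,C)$. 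For the remaining $k-1$ copies, the union $\bigcup_{i \neq j} F^{(i)}$ is a subgraph of a disjoint union of $k-1$ copies of $G$ which, being a subgraph of the $kC$-free graph $F$, is itself $kC$-free; hence $\sum_{i \neq j} e(F^{(i)}) \leq \ex((k-1)G, kC)$. Adding the two bounds gives $e(F) \leq \ex(G,C) + \ex((k-1)G,kC)$, which is the desired inequality.

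There is essentially no obstacle here: the only point requiring a moment's care is the fact that a connected subgraph of a disjoint union must lie within a single component, which is what forces every copy of $C$ into one of the $G^{(i)}$ and makes the pigeonhole step valid. One should also note that for the lower bound to make sense one wants $G$ (and hence each copy) large enough to host both $F_1$'s structure and the extra copy, but since the copies of $G$ in $kG$ are genuinely disjoint, no such constraint is actually needed — the construction is on disjoint vertex sets. Both directions together yield the claimed identity.
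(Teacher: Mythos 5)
Your proof is correct and follows essentially the same route as the paper: the lower bound by taking an optimal $kC$-free subgraph of $(k-1)G$ together with an optimal $C$-free subgraph of the last copy, and the upper bound by noting that at least one of the $k$ restrictions $F^{(i)}$ must be $C$-free since $C$ is connected. One trivial wording caveat: the components of $kG$ are the $k$ copies of $G$ only when $G$ is connected, but the argument really only needs the (true in general) fact that any connected subgraph of a disjoint union of copies of $G$ lies entirely within one copy.
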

\begin{proof}
	Denote by $G_1,\dots,G_k$ the disjoint copies of $G$ in $kG$. If we destroy all copies of $kC$ in $G_1 \cup \dots \cup G_{k-1}$ and all copies of $C$ in $G_k$, then the resulting graph is $kC$-free (as $C$ is connected). Hence
	$
	\ex(kG,kC) \geq \ex((k-1)G,kC) + \ex(G,C).
	$
	In the other direction, let $F$ be a $kC$-free subgraph of $kG$ satisfying $e(F) = \ex(kG, kC)$. Since $F$ is $kC$-free, there must be some $1 \leq i \leq k$ for which $F[V(G_i)]$ is $C$-free. Assume without loss of generality that $i = k$, namely that $F'' := F[V(G_k)]$ is $C$-free. Since $F' := F[V(G_1) \cup \dots \cup V(G_{k-1})]$ is clearly $kC$-free, we have
	$$
	\ex(kG,kC) = e(F) =
	e(F') + e(F'') \leq
	\ex((k-1)G,kC) + \ex(G,C),
	$$
	as required.
\end{proof}
It will be convenient to first prove Theorem \ref{thm:disconnected} in the case that all connected components of $H$ are isomorphic, namely that $H = kT$ for some tree $T$ and integer $k \geq 1$.
\begin{lemma}\label{lem:k_disjoint_copies}
	For every tree $T$ which is not a star, and for every integer $k \geq 1$, computing $\ex(G,kT)$ is NP-hard.
\end{lemma}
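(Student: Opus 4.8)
The plan is to reduce the computation of $\rem_T(\cdot)$ (equivalently $\ex(\cdot,T)$) to the computation of $\ex(\cdot,kT)$; since $T$ is a tree which is not a star, we already know from Theorem \ref{thm:tree} that computing $\rem_T(G')$ is NP-hard, so this reduction suffices. We induct on $k$: the base case $k=1$ is exactly Theorem \ref{thm:tree}. For the inductive step, suppose $k \geq 2$ and that we want to compute $\ex(G,T)$ for a given $n$-vertex graph $G$. The idea is to exhibit an auxiliary graph $A$ (depending only on $T$ and $k$, not on $G$) that forces the ``overhead'' of the remaining $k-1$ copies of $T$ to behave predictably, and then appeal to Lemma \ref{claim:kG}.

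Concretely, I would apply Lemma \ref{claim:kG} with $C = T$, which gives
\begin{equation*}
\ex(kG', kT) = \ex((k-1)G', kT) + \ex(G', T)
\end{equation*}
for every graph $G'$. If we could compute $\ex((k-1)G', kT)$ in polynomial time, then a single call to a hypothetical polynomial algorithm for $\ex(\cdot, kT)$ would yield $\ex(G',T)$, contradicting the hardness of the latter (unless $\mathrm{P}=\mathrm{NP}$). So the crux is to choose $G'$ cleverly so that $\ex((k-1)G', kT)$ is easy. The natural choice is to take $G'$ to be $G$ together with $k-1$ disjoint ``large cliques'' (or more precisely, $k-1$ disjoint copies of a fixed graph $B$ that is not $T$-free but is cheap to trim): then $(k-1)G'$ contains many disjoint copies of $B$, and an optimal $kT$-free subgraph will, by an exchange/averaging argument as in Lemma \ref{claim:kG}, want to keep all but a bounded number of these blocks $T$-free — and making a single copy of $B$ $T$-free costs a fixed constant $c = \rem_T(B)$. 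One then shows that $\ex((k-1)G', kT)$ equals $(k-1)e(G) + (k-1)^2 e(B) - (\text{an explicitly computable correction term})$, using that a $kT$-free graph can contain at most $k-1$ vertex-disjoint copies of $T$, so at least ``$(k-1)$-many blocks'' worth of copies of $T$ must be destroyed, and this is achievable by destroying them inside the cheap blocks $B$ rather than inside the copies of $G$. The cleanest way to make this rigorous is again an iterated application of Lemma \ref{claim:kG}: peeling off one block of $(k-1)G'$ at a time reduces everything to quantities of the form $\ex(G, jT)$ and $\ex(B, jT)$ for $j \leq k$, the latter being constants, so after rearranging one expresses $\ex(G,T)$ (or $\ex(G, (k-1)T)$, on which one can induct) in terms of a single evaluation of $\ex(\cdot, kT)$.

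The main obstacle I anticipate is the bookkeeping in the reduction: when we form $(k-1)G'$ and trim it to be $kT$-free, we must ensure that the cheapest way to kill ``enough'' copies of $T$ genuinely is to sacrifice entire cheap blocks $B$, rather than to remove a few edges inside copies of $G$ — i.e., we need $\rem_T(B)$ to be small relative to what destroying structure inside $G$ would cost, which forces a careful choice of $B$ (something like a disjoint union of copies of $K_{v(T)}$, or a long path, whose $\rem_T$ value is both nonzero and easy to compute). Equally, one must verify that the optimal $kT$-free subgraph of $(k-1)G'$ decomposes along the disjoint summands in the way Lemma \ref{claim:kG} predicts, which is really just iterating that lemma but requires keeping the copy-counting ($k-1$ disjoint copies of $T$ maximum) consistent at each peeling step. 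Once the arithmetic identity expressing $\ex(G,T)$ through one black-box call to $\ex(\cdot,kT)$ plus constants is in place, the NP-hardness transfers immediately by Theorem \ref{thm:tree}.
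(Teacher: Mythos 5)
Your core identity is the right one, but you then take a detour that is both unnecessary and flawed. The missing (simple) observation is that a reduction may invoke the hypothesized polynomial-time algorithm for $\ex(\cdot,kT)$ more than once and on any polynomially-sized input: given $G$, call it on $kG$ and on $(k-1)G$; Lemma \ref{claim:kG} gives $\ex(G,T)=\ex(kG,kT)-\ex((k-1)G,kT)$, so a polynomial-time algorithm for $\ex(\cdot,kT)$ would yield one for $\ex(\cdot,T)$, contradicting Theorem \ref{thm:tree}. This two-call reduction is exactly the paper's proof; no auxiliary blocks, no choice of a cheap gadget $B$, and no induction on $k$ are needed.

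The detour itself has a genuine gap. You want to pick $G'=G\cup(k-1)B$ so that $\ex((k-1)G',kT)$ becomes computable in polynomial time \emph{without} the oracle, but Lemma \ref{claim:kG} only decomposes $\ex(kG,kC)$, where the number of disjoint copies of $G$ equals the multiplicity of the forbidden forest; it says nothing about $\ex(jG',kT)$ for $j\neq k$, so the ``peeling'' step is unsupported. The correct decomposition over disjoint summands is of the form $\ex(G_1\cup G_2,kT)=\max_{i+j=k,\,i,j\geq 1}\bigl(\ex(G_1,iT)+\ex(G_2,jT)\bigr)$, and in your construction the terms $\ex(G,iT)$ attached to the copies of $G$ are precisely the quantities you cannot compute. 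Moreover, the heuristic that the optimum keeps the copies of $G$ intact and sacrifices cheap blocks $B$ fails outright whenever $G$ itself contains two or more vertex-disjoint copies of $T$: then the $k-1$ copies of $G$ alone already host at least $k$ disjoint copies of $T$ (for $k\geq 2$), so edges inside the $G$-copies must be deleted and the ``explicitly computable correction term'' in fact depends on $G$ in an NP-hard way. The extra machinery cannot be repaired cheaply; drop it and use the two oracle calls.
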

\begin{proof}
	Lemma \ref{claim:kG} shows that if we could compute $\ex(G,H) = \ex(G,kT)$ in polynomial time for every graph $G$, then we could also compute $\ex(G,T)$ in polynomial time for every graph $G$, as $\ex(G,T) = \ex(kG,kT) - \ex((k-1)G,kT)$. But computing $\ex(G,T)$ is NP-hard by Theorem \ref{thm:tree}.
\end{proof}
\begin{proof}[Proof of Theorem \ref{thm:disconnected}]
	Let $T_1,\dots,T_\ell$ be the connected components of $H$. Suppose, without loss of generality, that $T_1$ is not a star and has the largest number of edges among the $T_i$'s which are not stars. By permuting the indices, we can also assume that $T_1,\dots,T_k$ are isomorphic to $T_1$, while $T_{k+1},\dots,T_{\ell}$ are not isomorphic to $T_1$. Observe that $T_1$ is not a subgraph of $T_i$ for any $k+1 \leq i \leq \ell$, because either $T_i$ is a star (and hence cannot contain $T_1$), or $e(T_i) \leq e(T_1)$ and $T_i,T_1$ are not isomorphic.
	
	We reduce the problem of computing $\ex(G,kT_1)$ to the problem of computing $\ex(G,H)$; the former problem is NP-hard by Lemma \ref{lem:k_disjoint_copies}. Let $G$ be an input graph with $n$ vertices. Let $G'$ be the graph obtained from $G$ by adding to it, for each $k+1 \leq i \leq \ell$, a collection of $n^2$ disjoint copies of $T_i$ (which are also disjoint from $G$).
	We show that $$\ex(G',H) = e(G') - e(G) + \ex(G,kT_1),$$ which will prove the correctness of the reduction.
	First, observe that if we destroy all copies of $kT_1$ in $G'[V(G)]$ then the resulting subgraph of $G'$ will be $kT_1$-free (as $T_1$ is connected and none of $T_{k+1},\dots,T_{\ell}$ contain $T_1$ as a subgraph). Hence, this subgraph of $G'$ is $H$-free. This shows that $\ex(G',H) \geq e(G') - e(G) + \ex(G,kT_1)$. Note that in particular $\ex(G',H) \geq e(G') - \binom{n}{2}$.
	
	In the other direction, let $F'$ be an $H$-free subgraph of $G'$ with $e(F') = \ex(G',H)$. Since $e(F') \geq e(G') - \binom{n}{2}$, $F'$ must contain (at least) one of the $n^2$ disjoint copies of $T_i$ added to $G'$ for each $k+1 \leq i \leq \ell$. But then $F'[V(G)]$ must be $kT_1$-free, as otherwise $F'$ would contain a copy of $H$. Hence
	$\ex(G',H) = e(F') \leq e(G') - e(G) + \ex(G,kT_1)$, as required. This completes the proof.
\end{proof}

\section{Concluding Remarks and Open Problems}\label{sec:concluding}

As we discussed in Section \ref{sec:intro}, an important special case of Yannakakis's problem which is still open,
asks to characterize the connected graphs $H$ for which computing $\rem_H(G)$ is NP-hard. Combining the result of \cite{ASS} (who proved that
$\rem_H(G)$ is NP-hard for every non-bipartite $H$) and Theorem \ref{thm:tree}, it remains to handle the case of bipartite graphs $H$ which
are not trees.
In our proof of Theorem \ref{thm:tree} in Section \ref{sec:proofmain}, we actually showed that if $T$ is a tree with diameter at most $4$ which is not a star, then it is NP-hard to tell whether an $n$-vertex graph $G$ satisfies $\rem_T(G) = |E(G)|-\ex(n,T)$. Namely, it is NP-hard to decide if $G$ contains an extremal $T$-free graph.
It is thus natural to try
and extend our approach in order to prove that computing $\rem_H(G)$ is NP-hard for every non-star bipartite graph $H$.
This raises the question of the complexity of deciding whether $\rem_H(G) = |E(G)|-\ex(n,H)$ for other bipartite graphs $H$.

Before addressing the case of bipartite $H$, we first observe that when $H$ is the triangle $K_3$, we can in fact decide in polynomial time whether
$\rem_{K_3}(G) = |E(G)|-\ex(n,K_3)$. Indeed, Mantel's theorem \cite{FS} states that $\ex(n,K_3) = \lfloor \frac{n^2}{4} \rfloor$ and
that the only graph meeting this bound is the balanced complete bipartite graph $K_{\lfloor \frac{n}{2} \rfloor, \lceil \frac{n}{2} \rceil}$.
Hence, deciding if $\rem_{K_3}(G) = |E(G)|-\ex(n,K_3)$ is equivalent to deciding if $G$ contains $K_{\lfloor \frac{n}{2} \rfloor, \lceil \frac{n}{2} \rceil}$. To see why this problem is solvable in polynomial time, suppose that $G$'s complement graph has $m$ connected components of sizes $a_1,\ldots,a_m$. It is easy to see that
$G$ contains $K_{\lfloor \frac{n}{2} \rfloor, \lceil \frac{n}{2} \rceil}$ if and only if there is $S \subseteq [m]$ so that $\sum_{i\in S}a_i=\lfloor \frac{n}{2} \rfloor$.
But this latter task can be easily solved in polynomial time using dynamic programming.
A similar argument shows that if $H$ is a non-bipartite edge-critical\footnote{A graph is called {\em edge-critical} if it contains an edge whose deletion decreases the chromatic number.} graph then deciding whether $\rem_H(G) = |E(G)|-\ex(n,H)$ can be done in polynomial time. This follows from the fact that for such graphs $H$, the only $n$-vertex $H$-free graph with $\ex(n,H)$ edges is the Tur\'{a}n graph with $\chi(H) - 1$ parts, see \cite{FS}.

Since the hardness of computing $\rem_H(G)$ is still open only for bipartite $H$, it is more relevant to our investigation here
to determine whether it is NP-hard to tell if $\rem_H(G) = |E(G)|-\ex(n,H)$ for such $H$. Unfortunately, as opposed to the cases when $H$
is a tree or a graph of chromatic number at least 3, we have a very poor understating of $\ex(n,H)$, let alone of the extremal
graphs meeting this bound, see \cite{FS} for more details. The only case which is relatively well-understood is when $H$ is the $4$-cycle
$C_4$. In this case F\"uredi \cite{Furedi} proved that if $q > 13$ is a prime power and $n=q^2+q+1$, then $\ex(n,C_4)=\frac12 q(q+1)^2$.
He further proved (see \cite{FS}) that there is a unique graph meeting this bound (the so called {\em polarity graph}).
It would be very interesting to decide if these facts can be used to show that deciding if
$\rem_{C_4}(G) = |E(G)|-\ex(n,C_4)$ is NP-hard (at least when $n$ is as in F\"uredi's theorem). Again, this is equivalent to
deciding whether an input graph on $n=q^2+q+1$ vertices contains a copy of the polarity graph.

Finally, note that the algorithm described in Lemma \ref{lem:star-forest_low_max_degree} works in time $O(n^{B})$ for a somewhat large $B = B(H)$, i.e., $B = \Theta(v(H)^4)$. It may be interesting to improve this dependence of the exponent on $H$.


\bibliographystyle{plain}

\appendix
\section{Tutte's reduction}
\begin{proof}[Proof of Theorem \ref{thm:f_factor}]
	Let $G$ be a graph and let $f : V(G) \rightarrow \{0,\dots,v(G)\}$. By replacing $f(v)$ with $\min\{d(v),f(v)\}$, we can assume that $f(v) \leq d(v)$. Denote by $m$ the maximum number of edges in a spanning subgraph $F$ of $G$ such that $d_F(v) \leq f(v)$ for every $v \in V(G)$. Construct a graph $G'$ as follows. For each $e = xy \in E(G)$, add two new vertices $e_x, e_y$ and connect them with an edge. Next, for each $x \in V(G)$, add $d(x) - f(x)$ new vertices $x_1,\dots,x_{d(x)-f(x)}$ and connect them to $e_x$ for every edge $e \in E(G)$ with $x \in e$. The resulting graph is $G'$.  Note that the edge-set $\{e_xe_y : e = xy\in E(G)\}$ forms a matching in $G'$. We claim that $\nu(G') = m + \sum_{x \in V(G)}(d(x) - f(x))$, where $\nu(G')$ is the size of a largest matching in $G'$.
	
	First, let $F$ be a spanning subgraph of $G$ with $m$ edges and with $d_F(v) \leq f(v)$ for every $v \in V(G)$. Construct a matching $M$ of $G'$ as follows. For each $e = xy \in E(F)$, add the edge $e_xe_y \in E(G')$ to $M$. Next, for each $x \in V(G)$, let $S(x)$ be the set of edges $e \in E(G) \setminus E(F)$ which touch $x$.  Then $|S(x)| \geq d(x) - f(x)$ because $d_F(x) \leq f(x)$. Also, for each $e \in S(x)$, $e_x$ is not covered by $M$. Take a matching between the sets $\{x_1,\dots,x_{d(x)-f(x)}\}$ and $\{e_x : e \in S(x)\}$ which saturates the former (this is possible as $|S(x)| \geq d(x) - f(x)$), and add this matching to $M$. The resulting matching $M$ has size $m + \sum_{x \in V(G)}(d(x) - f(x))$.
	
	In the other direction, let $M$ be a maximum matching in $G'$. For each $x \in V(G)$ and $1 \leq i \leq d(x) - f(x)$, if $x_i$ is not covered by $M$ then take an arbitrary edge $e =xy \in E(G)$ containing $x$ and replace $M$ with $M - e_xe_y + e_xx_i$. This retains $M$ a matching and does not decrease its size. Hence, we can assume that $x_i$ is covered by $M$ for each $x \in V(G)$ and $1 \leq i \leq d(x) - f(x)$. Let $F$ be the subgraph of $G$ consisting of all edges $e = xy \in E(G)$ such that $e_xe_y \in M$. For each $x \in V(G)$, consider the set $E(x) = \{e_x : e \in E(G), x \in e\}$. In $M$ there are $d(x)-f(x)$ edges which connect a vertex from $E(x)$ with a vertex from $\{x_1,\dots,x_{d(x)-f(x)}\}$. Hence, at most $f(x)$ edges in $M$ connect a vertex in $E(x)$ with a vertex in $E(y)$ for some other $y$. So $d_F(x) \leq f(x)$. This also shows that $|E(F)| = |M| - \sum_{x \in V(G)}(d(x) - f(x))$. This completes the proof.
\end{proof}

\end{document}